\newtheorem{defn}{Definition}
\newtheorem{prop}{Proposition}
\newtheorem{lemma}{Lemma}
\newtheorem{cor}{Corollary}
\newtheorem{thm}{Theorem}
\renewcommand{\P}{\mathbb{P}}
\newcommand{\subseq}{\subseteq}
\newcommand{\Z}{\mathbb{Z}}
\newcommand{\vhi}{\varphi}
\newcommand{\kar}{\text{char}}
\renewcommand{\P}{\mathbb{P}}
\newcommand{\wilde}{\widetilde}
\newcommand{\Pic}{\text{Pic}}
\newcommand{\tpitchfork}{%
  \vbox{
    \baselineskip\z@skip
    \lineskip-.52ex
    \lineskiplimit\maxdimen
    \m@th
    \ialign{##\crcr\hidewidth\smash{$-$}\hidewidth\crcr$\pitchfork$\crcr}
  }%
}
\newcommand{\Pcal}{\mathcal{P}}
\newcommand{\+}{\oplus}
\title{The classification of quasi-elliptic fibrations and unexpected plane cubics in characteristics 2 and 3}
\author{Jake Kettinger}
\affil{Department of Mathematics, Colorado State University}
\date{}
\begin{document}

\maketitle

\begin{abstract}
In this paper, we categorize all isomorphism classes of quasi-elliptic surfaces over a field $k$ of characteristic 2 or 3. For every quasi-elliptic surface $X$, we classify all possible sequences of blow-downs from $X$ to the projective plane $\P^2_k$. We then use these categorizations to identify all unexpected plane cubic curves in characteristic 2 and present a proof of the lack of unexpected cubics in characteristic 3. Before the work in this paper-- based partly on the author's thesis-- the complete classification of unexpected plane cubic curves in characteristic 2 was unknown, as well as the question of the existence of unexpected plane cubic curves in characteristic 3.
\end{abstract}


\section{Introduction to quasi-elliptic fibrations}

This paper concerns the notion of a quasi-elliptic surface $X$. The main result of this paper is found in Theorem \ref{thm3} and Corollary \ref{no3} in section 4, wherein we prove there are no unexpected plane cubics in $\P^2_k$ for $\text{char}\,k=3$. In this section, we prove some general results about quasi-elliptic fibrations. In section 2, we describe in detail the Dynkin diagrams associated to each quasi-elliptic surface in characteristic 2. We use the Dynkin diagrams to investigate all possible ways of blowing down the surface $X$ to a plane, up to automorphism of $X$, providing explicit examples of each type of fibration. All the ways of blowing $X$ down to a plane then give us all possible labelings of the vertices of the Dynkin diagram of $X$, with labels as $(-2)$-curves of the Picard group, up to automorphism of $X$.  In section 3, we will collect all the data acquired about the blow-downs seen in section 2 to determine which blow-downs yield unexpected plane cubics in characteristic 2. 

In section 4, we describe the Dynkin diagrams associated to quasi-elliptic surfaces in characteristic 3, and perform the same analysis as in section 2 to classify all possible ways to blow up a plane to get a quasi-elliptic surface, up to automorphism of $X$. Finally, we investigate each possible manner of blowing up $\P^2$ to a quasi-elliptic surface and go on to show that none of these quasi-elliptic surfaces yield an unexpected plane cubic curve, and indeed that there are no unexpected plane cubics in characteristic 3. 

This is related to work done in characteristic 0 by Zanardini \cite{Zanardini}, who had classified constructions of Halphen pencils of index 2; (quasi-)elliptic surfaces are examples of index-1 Halphen pencils \cite{Halphen}.
\begin{defn}\normalfont
Let $k$ be an algebraically closed field, and let $X$ be a smooth projective surface. A \textbf{quasi-elliptic fibration} $f:X\to\P^1_k$ is a morphism whose general fiber is isomorphic to a singular plane cubic curve. 
\end{defn}
Note that if $X$ is a surface admitting such a morphism $f$, we will often simply refer to $X$ as a quasi-elliptic fibration. It is known that quasi-elliptic fibrations can occur only in characteristics 2 and 3 \cite{BM,L}. Also, any quasi-elliptic surface is also the consecutive blowing-up of the projective plane $\P^2$ at nine (possibly infinitely-near) points \cite{HL}. The nine points are in such a configuration that $\dim|-K_X|=1$, where $K_X\in\Pic X$ is the canonical divisor of $X$; thus $X$ is Jacobian \cite{HM}.

    We will show that every quasi-elliptic fibration $X$ is extremal; that is, every quasi-elliptic fibration has only finitely many sections (equivalently, the Mordell-Weil group of sections $\text{MW}(X)$ is finite).
    
    Each section of $X$ corresponds to a linear combination of the base points under the group law of each fibre, with a fixed base point chosen as the group identity. We will show such a set of points is well-defined.
    \begin{lemma}\label{ex}
    Let $A,B,C,D\in F\subset \P^2$, a general plane cubic curve. Let $\langle P_1,\dots,P_n\rangle_{Q}$ denote the subgroup of $F$ generated by the points $P_1,\dots,P_n\in F$ with $Q\in F$ chosen as the zero point. Let $P_1+_Q P_2$, $-_Q P$, and $n\cdot_Q P$ denote addition, negation, and $\Z$-module multiplication with $Q$ as the zero point, respectively. Then $A+_B C\in\langle A,B,C\rangle_D$, $A+_B C\in\langle A,B\rangle_C$, $-_A B\in\langle A\rangle_B$, and $-_A B\in\langle A,B\rangle_C$.
    \end{lemma}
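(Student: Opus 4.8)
The plan is to translate the group law on $F$ into linear equivalence of divisors and then check each membership by exhibiting an explicit $\Z$-combination. Since a general plane cubic $F$ is smooth, it has arithmetic genus $1$, so for any zero point $Q$ the Abel--Jacobi map $\phi_Q\colon F\to\Pic^0 F$, $P\mapsto[P]-[Q]$, is by construction a group isomorphism from $(F,+_Q)$. Applying $\phi_Q$ to an element $R=a_1\cdot_Q P_1+_Q\cdots+_Q a_n\cdot_Q P_n$ of $\langle P_1,\dots,P_n\rangle_Q$ and using additivity gives $[R]-[Q]=\sum_i a_i([P_i]-[Q])$, that is, the general rule I actually use:
$$R\;\sim\;\sum_i a_i P_i-\Big(\sum_i a_i-1\Big)Q \qquad\text{as divisors on }F.$$
In particular, unwinding the case $n=2$ recovers the familiar dictionary $P_1+_Q P_2=P_3 \iff P_1+P_2\sim P_3+Q$, and $n=1,\ a_1=-1$ gives $-_Q P=P' \iff P+P'\sim 2Q$.

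With this rule the lemma is bookkeeping: each claim reduces to solving a tiny linear system for exponents. From the dictionary, $P:=A+_B C$ satisfies $P\sim A+C-B$, and $N:=-_A B$ satisfies $N\sim 2A-B$. For the first claim I look for integers $l,m,n$ with $lA+mB+nC-(l+m+n-1)D\sim A+C-B$; the choice $(l,m,n)=(1,-1,1)$ works since then $l+m+n-1=0$, so $A+_B C=A-_D B+_D C\in\langle A,B,C\rangle_D$. The same point equals $A-_C B\in\langle A,B\rangle_C$, because $1\cdot A+(-1)\cdot B-(1+(-1)-1)C=A-B+C$, which gives the second claim. For the negation statements, $N=2\cdot_B A\in\langle A\rangle_B$ since $2A-(2-1)B=2A-B$, and $N=2\cdot_C A-_C B\in\langle A,B\rangle_C$ since $2A+(-1)B-(2+(-1)-1)C=2A-B$. (Equivalently, once the first, second, and $\langle A\rangle_B$ statements are in place, the remaining two follow from the evident inclusions $\langle A\rangle_B\subseteq\langle A,B\rangle_C\subseteq\langle A,B,C\rangle_D$, which the identical divisor computation establishes.)

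I do not anticipate a real obstacle; the only substantive ingredient is the dictionary of the first paragraph, and everything after it is linear algebra over $\Z$. The single point that warrants care is the passage between $+_Q$ and divisor classes when $F$ is \emph{singular}---for instance the cuspidal cubic fibres of a quasi-elliptic fibration, which is the situation this lemma is meant to serve. There one restricts all the points to the smooth locus $F^{\mathrm{sm}}$ and replaces $\phi_Q$ by the generalized-Jacobian isomorphism $F^{\mathrm{sm}}\xrightarrow{\ \sim\ }\Pic^0 F$, $P\mapsto[P]-[Q]$; the displayed rule and hence the four verifications then go through verbatim.
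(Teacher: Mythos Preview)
Your proof is correct and takes essentially the same approach as the paper. The paper's proof is the single sentence ``This is immediate from the bijection $F\leftrightarrow\Pic^1 F$,'' which is exactly the dictionary you unpack via $\phi_Q\colon F\to\Pic^0 F$; you have simply written out the explicit integer combinations that the paper leaves to the reader, and your added remark about the singular case (relevant for the quasi-elliptic application) goes a step beyond what the paper records here.
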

    \begin{proof}
    This is immediate from the bijection $F\leftrightarrow\Pic^1 F$.
      \end{proof}
    Thus we immediately have the following corollaries.
    \begin{cor}
    For any nine points $P_1,\dots,P_9\in F$, we have $\langle P_1,\dots,P_9\rangle_{P_i}=\langle P_1\dots,P_9\rangle_{P_j}$ for all $1\leq i,j\leq 9$.
    \end{cor}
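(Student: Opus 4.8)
The plan is to reduce to two inclusions: it suffices to prove $\langle P_1,\dots,P_9\rangle_{P_i}\subseteq\langle P_1,\dots,P_9\rangle_{P_j}$, since the reverse inclusion follows by exchanging the roles of $i$ and $j$. Write $H_j:=\langle P_1,\dots,P_9\rangle_{P_j}$. By definition $H_j$ is a subgroup of $(F,+_{P_j})$ containing $P_1,\dots,P_9$; in particular $P_i\in H_j$, since $P_i$ is one of the nine generators. I will show that $H_j$ is \emph{also} closed under the operations $+_{P_i}$ and $-_{P_i}$. Granting this, $H_j$ is a nonempty subset of $F$ that contains $P_1,\dots,P_9$ and is closed under $+_{P_i}$ and $-_{P_i}$, so it contains the subgroup of $(F,+_{P_i})$ generated by those points, which is exactly $\langle P_1,\dots,P_9\rangle_{P_i}$.

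The two closure statements are precisely where Lemma \ref{ex} is applied, with $P_j$ always playing the role of the auxiliary zero point. For negation: given $R\in H_j$, the assertion $-_A B\in\langle A,B\rangle_C$ of Lemma \ref{ex} with $A=P_i$, $B=R$, $C=P_j$ gives $-_{P_i}R\in\langle P_i,R\rangle_{P_j}$; since $P_i,R\in H_j$ and $H_j$ is a subgroup of $(F,+_{P_j})$, we have $\langle P_i,R\rangle_{P_j}\subseteq H_j$, hence $-_{P_i}R\in H_j$. For addition: given $R,S\in H_j$, the assertion $A+_B C\in\langle A,B,C\rangle_D$ of Lemma \ref{ex} with $A=R$, $B=P_i$, $C=S$, $D=P_j$ gives $R+_{P_i}S\in\langle R,P_i,S\rangle_{P_j}\subseteq H_j$, for the same reason. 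This finishes the proof.

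There is no real obstacle here, this being a genuine corollary; the only points needing care are keeping track of which point serves as the zero in each invocation of Lemma \ref{ex}, and the one-line observation that a subgroup of $(F,+_{P_j})$ generated by finitely many elements of $H_j$ lies inside $H_j$ --- which is where one actually uses that $P_i$ (and $P_j$) are among the nine points. As a sanity check, the conclusion is also transparent through the bijection $F\leftrightarrow\Pic^1 F$ used to prove Lemma \ref{ex}: taking $P_j$ as zero identifies $H_j$ with the subgroup $G_j\subseteq\Pic^0 F$ generated by $[P_1]-[P_j],\dots,[P_9]-[P_j]$, and since $[P_\ell]-[P_i]=([P_\ell]-[P_j])-([P_i]-[P_j])$ with $[P_i]-[P_j]$ itself a listed generator, one reads off $G_i=G_j$; the sets $H_i$ and $H_j$ are then the preimages in $F$ of one and the same coset of this common subgroup.
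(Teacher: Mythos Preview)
Your argument is correct and is exactly the intended one: the paper states this corollary with no proof, deeming it immediate from Lemma~\ref{ex}, and your write-up simply supplies the details of that deduction (closure of $H_j$ under $+_{P_i}$ and $-_{P_i}$ via the four membership assertions of the lemma). Your closing sanity check through $\Pic^0 F$ is also in the spirit of the paper, since that bijection is precisely how Lemma~\ref{ex} itself is proved.
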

    
    \begin{cor}\label{extremal}
    Let $X$ be a blow-up of $\P^2$ that gives a quasi-elliptic fibration. Then $X$ is extremal.
    \end{cor}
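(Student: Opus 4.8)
The plan is to identify the sections of $X$ with the elements of a single finitely generated subgroup of the smooth locus of a general fibre, and then to observe that for a quasi-elliptic fibration that subgroup is $p$-torsion, hence finite.

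Concretely, I would write $X$ as the blow-up of $\P^2$ at the nine (possibly infinitely near) base points $P_1,\dots,P_9$ of the anticanonical pencil $|-K_X|$, with exceptional classes $E_1,\dots,E_9$ and hyperplane class $H$, so that the fibre class of $f$ is $-K_X=3H-\sum_i E_i$. Since $X$ is quasi-elliptic, a general member $F$ of the pencil is a cuspidal cubic; let $F_{\mathrm{sm}}$ be its smooth locus, with its commutative algebraic group structure, and let $\widetilde F\cong F$ be the corresponding fibre of $f$. A section $S$ of $f$ is a $(-1)$-curve with $S\cdot(-K_X)=1$; because $S^2=-1$ it is the unique effective divisor in its class, so distinct sections have distinct classes in $\Pic X$. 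Restricting $\O_X(S)$ to $\widetilde F$ yields a degree-$1$ line bundle, i.e.\ a point $Q_S\in F_{\mathrm{sm}}$. Writing $S\equiv dH-\sum_i m_i E_i$ and using that a hyperplane section of $F$ is linearly equivalent to $3O$ for a smooth inflection point $O$, a short computation in $\Pic F$ gives $Q_S=-\sum_i m_i P_i$ in the group law on $F_{\mathrm{sm}}$ with identity $O$; in particular $Q_S$ lies in the subgroup generated by $P_1,\dots,P_9$. By Lemma~\ref{ex} and the preceding corollary this subgroup $G:=\langle P_1,\dots,P_9\rangle$ does not depend on the chosen origin, so $S\mapsto Q_S$ is a well-defined map from the set of sections to $G$.

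It then remains to see that $G$ is finite and that $S\mapsto Q_S$ is finite-to-one. For finiteness of $G$: since $F$ is a cuspidal cubic, $F_{\mathrm{sm}}\cong\mathbb{G}_a$, so $F_{\mathrm{sm}}(\overline k)=(\overline k,+)$ is a vector space over $\F_p$ with $p=\text{char }k\in\{2,3\}$; hence $G$, being generated by nine elements, is a finite $\F_p$-vector space (of size at most $p^{9}$). For finiteness of the fibres of $S\mapsto Q_S$: the classes restricting to a fixed point of $F_{\mathrm{sm}}$ form a coset of $\ker(\Pic X\to\Pic\widetilde F)$, and since this kernel is negative semidefinite with radical spanned by $-K_X$ while $S\cdot(-K_X)=1$, only finitely many classes in such a coset can have self-intersection $-1$. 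Therefore $f$ has only finitely many sections, i.e.\ $\text{MW}(X)$ is finite and $X$ is extremal.

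The conceptual heart of the argument — and what makes \emph{extremality} come out rather than merely finite generation of $\text{MW}(X)$ — is the observation that the smooth locus of a cuspidal cubic is $p$-torsion, which is exactly what confines quasi-elliptic fibrations to characteristics $2$ and $3$. The main technical obstacle is the bookkeeping in the second paragraph: realizing $Q_S$ as a $\Z$-linear combination of the $P_i$ (so that it lands in $G$ at all) and checking that the resulting subgroup is independent of the origin, which is precisely the role of Lemma~\ref{ex} and the preceding corollary; some care is also needed with the inflection-point normalization and with the finite-to-one statement when several of the $P_i$ are infinitely near.
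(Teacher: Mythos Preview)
Your approach is the same as the paper's in spirit: both exploit that the smooth locus of a cuspidal cubic is $\mathbb G_a$, hence $p$-torsion in characteristic $p\in\{2,3\}$, so that the subgroup generated by the nine base points is a finite $\F_p$-vector space. You also flesh out more carefully than the paper why $Q_S$ lands in $\langle P_1,\dots,P_9\rangle$, via the explicit $\Pic$ computation with the inflection-point normalization.

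There is, however, a genuine gap in your finite-to-one step. For a quasi-elliptic surface the restriction $\Pic X\to\Pic\widetilde F$ to a \emph{single} closed fiber has a large kernel: since $\Pic^0\widetilde F\cong\mathbb G_a(k)$ is $p$-torsion, the kernel contains $p\cdot(-K_X)^\perp$, so $\ker/\Z(-K_X)$ is a full-rank sublattice of $E_8$, in particular infinite. Writing $V=V_0+n(-K_X)$ with $V_0$ a representative modulo the radical, one finds $(D_0+V)^2=(D_0+V_0)^2+2n$, so for each $V_0$ there is a choice of $n$ giving self-intersection $-1$; thus a coset of the kernel contains \emph{infinitely} many classes with $D^2=-1$ and $D\cdot(-K_X)=1$. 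The numerical conditions alone do not yield finiteness.

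The fix---and this is what the paper does, if tersely---is to let $\eta$ run over $\P^1$ rather than fix one fiber. A section is determined by its intersection with every fiber, so it is the curve traced out by a fixed $\Z$-linear combination of the $P_i$ computed in the group law of each $F_\eta$; equivalently, sections correspond to $k(\P^1)$-points of the generic fiber. Then $\text{MW}(X)$ is finitely generated (as a quotient of $\Pic X$) and embeds in $\mathbb G_a$ over the function field, hence is $p$-torsion and finite. No separate finite-to-one argument over a closed fiber is needed.
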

    \begin{proof}
   By Lemma \ref{ex}, it is well-defined to say each section of $X$ corresponds to a linear combination of the base points under the group law of each fibre, with some fixed base point chosen as the zero.
    
    Let $F_\eta$ be the fiber of $\eta\in\P^1_k$, and define $G_\eta$ as $\langle P_1,\dots,P_9\rangle_{P_i}$ as a subgroup of $F_\eta$. Since each fibre $F_\eta$ is of additive type \cite{HL}, and $\kar\, k\in\{2,3\}$, the subgroup $G_\eta$ is a set of linear combinations of finitely many $\Z/2\Z$- or $\Z/3\Z$-vectors and is thus finite. Since each $(-1)$-curve is a section, they correspond to a curve given by a point in $G_\eta$ as $\eta$ runs through $\P^1_k$, there must be finitely many $(-1)$-curves. Therefore $X$ is extremal.
    \end{proof}
    
    The following lemma will be used to enumerate the number of $(-1)$-curves of every quasi-elliptic surface we will see in this paper.
    
    \begin{lemma}\label{MW}
    Let $X$ be a (quasi-)elliptic surface with canonical divisor $K\in\Pic(X)$. Define one of the $(-1)$-curves $s_0$ of $X$ as the zero-section of $X$ in the Mordell-Weil group $\text{MW}(X)$. Then define $U=\langle K,s_0\rangle$ and $V\leq\Pic(X)$ to be generated by the $(-2)$-curves that do not intersect $s_0$. Then $V\leq U^{\perp}$ and $$\text{MW}(X)\cong U^{\perp}/V.$$ 
    \end{lemma}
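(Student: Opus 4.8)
The plan is to use the standard theory relating the Mordell-Weil group of a (quasi-)elliptic surface to the Néron-Severi lattice. First I would recall the Shioda-Tate setup: for a (quasi-)elliptic fibration $f\colon X\to\P^1$ with a section, the Picard group $\Pic(X)$ is generated by the zero-section $s_0$, a general fiber $F$, and the irreducible components of the fibers; the trivial lattice $T$ is the sublattice generated by $s_0$, $F$, and the fiber components not meeting $s_0$, and one has the fundamental isomorphism $\text{MW}(X)\cong\Pic(X)/T$ sending a section to its class modulo $T$. Here the fiber components not meeting $s_0$ are precisely the $(-2)$-curves in question (away from the multiple fiber issues, which do not arise since the generic fiber is a cuspidal cubic with one component in each reducible fiber playing the role of identity), and $F$ is linearly equivalent to $-K_X$ because $X$ is a Jacobian (quasi-)elliptic surface with $\dim|-K_X|=1$ and $p_g=0$ in the relevant cases — so $\langle K, s_0\rangle = \langle F, s_0\rangle = U$ in the notation of the lemma.

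Next I would translate this into the orthogonal-complement language requested. The key numerical input is that on a (quasi-)elliptic surface $F^2=0$, $F\cdot s_0=1$, $s_0^2=-1$ (a $(-1)$-curve), and $F\cdot C=0$ for every fiber component $C$ (since $C$ is contained in a fiber). Hence $U=\langle F,s_0\rangle$ is a rank-$2$ unimodular (hyperbolic) sublattice of $\Pic(X)$, and by unimodularity $\Pic(X)=U\oplus U^\perp$. The $(-2)$-curves $C$ not meeting $s_0$ satisfy $C\cdot F=0$ and $C\cdot s_0=0$, so $V\subseteq U^\perp$, giving the first assertion $V\leq U^\perp$. Combining the orthogonal splitting with the Shioda-Tate isomorphism: $\text{MW}(X)\cong\Pic(X)/T$, and since $T=U\oplus V$ sits inside $U\oplus U^\perp$ with the $U$-factors agreeing, we get $\Pic(X)/T\cong U^\perp/V$, which is the claimed formula.

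The steps in order: (1) establish $F\sim -K_X$ so that $U$ as defined via $K$ equals $\langle F,s_0\rangle$, using the Jacobian hypothesis and that these surfaces are rational or have $p_g=0$; (2) record the intersection numbers showing $U$ is unimodular and hence $\Pic(X)=U\oplus U^\perp$; (3) verify $V\leq U^\perp$ by the intersection conditions defining $V$; (4) invoke the Shioda-Tate theorem $\text{MW}(X)\cong\Pic(X)/T$ with $T=U\oplus V$; (5) conclude $U^\perp/V\cong\Pic(X)/T\cong\text{MW}(X)$.

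The main obstacle I expect is step (1) together with making sure the Shioda-Tate formalism applies verbatim in the quasi-elliptic case: the classical statement is usually phrased for elliptic surfaces, and one must check that for a quasi-elliptic fibration every fiber (which is of additive type, as cited) still contributes its "non-identity" components to the trivial lattice in the same way, that there are no multiple fibers obstructing $F\sim -K_X$, and that the canonical bundle formula $K_X\sim (2g(\P^1)-2+\chi)F$ with $g(\P^1)=0$ indeed forces $-K_X\sim F$ once $\chi=\chi(\O_X)=1$ — equivalently one argues directly that $|-K_X|$ is the pencil of fibers since $\dim|-K_X|=1$. A secondary subtlety is confirming that the zero-section $s_0$ meets each fiber in exactly the identity component, so that "the $(-2)$-curves that do not intersect $s_0$" is exactly the right generating set for the non-identity part of the trivial lattice; this is automatic from $F\cdot s_0=1$ and the fact that each fiber is connected with the identity component appearing with multiplicity one.
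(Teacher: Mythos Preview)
Your proposal is correct and follows the standard Shioda--Tate argument; the paper itself gives no proof at all, simply citing ``Line VII.2.5 of Theorem VII.2.1 in \cite{Miranda89}.'' Your sketch is essentially the content of that reference, so there is nothing to compare.
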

    \begin{proof}
    Line VII.2.5 of Theorem VII.2.1 in \cite{Miranda89}.
    \end{proof}
    In particular, since $X$ is extremal, we have that the number of $(-1)$-curves is equal to the size of the quotient group $U^\perp/V$.

    \begin{defn}
    Let $Z\subseq \P^2_k$ be a set of points. Let $I_{Z+dP}\subseq k[\P^2]$ be the ideal $\bigcap_{q\in Z}I(q)\cap I(P)^d$. We say $Z$ admits an \textbf{unexpected curve} of degree $d+1$ if $$h^0\left(\P^2_k,I_{Z+dP}(d+1)\right)>\max\left(0,I_Z(d+1)-\binom{d+1}{2}\right),$$ for a general point $P\in\P^2_k$.
    \end{defn}
    
    For a finite set $Z\subseq \P^n_k$ for $n\geq 3$, we consider whether $Z$ admits an unexpected cone of degree $d$, which is when $$h^0\left(\P^n_k,I_{Z+dP}(d)\right)>\max\left(0,I_Z(d)-\binom{d+n-1}{n}\right)$$ for a general point $P\in\P^n_k$. We will borrow notation from Chiantini and Migliore \cite{CM}, who denote $Z$ admitting an unexpected cone of degree $d$ as $Z$ having the $C(d)$ property, in $\P^n_k$ (or $C_Z(d)$ if it is not clear which $Z$ is intended).
    
    In 2017, Solomon Akesseh \cite{A} proved that a finite set $Z\subseq\P^2$ of distinct points has an unexpected plane cubic curve only when $Z$ consists of the seven points of the Fano plane in characteristic 2. Farnik, Galuppi, Sodomaco, and Trok \cite{FGST} recovered the result that there is no unexpected cubic in characteristic 0, and in addition showed in characteristic 0 that unexpected quartics arise for a unique (up to choice of coordinates) configuration of 9 points in $\P^2$. Configurations that admit unexpected varieties in higher dimensions are connected to an interesting geometric property known as the geproci property \cite{sevenauthors}, which also has a home in the positive characteristic setting \cite{Kettinger}.

    To begin discussing unexpected cubic curves, it is necessary to detail their connection to quasi-elliptic fibrations with the following lemma. Note that throughout this paper, we refer to a set $Z$ of ``not necessarily distinct" points; this refers to the fact that some points of $Z$ may be infinitely-near other points of $Z$. Given an algebraic set $X$, a point $Q$ is \textbf{infinitely-near} the point $P$ if $Q$ maps to $P$ under the standard blowup map $\pi_P:\text{Bl}_P(X)\to X$, as in Chapter 4.2 of \cite{GH}. On the other hand, if $Q\in X\setminus {P}$, then $Q$ and $P$ are \textbf{distinct}. We will often denote an ordinary point of $\P^2_k$ as $P^{(1)}$ and denote a point infinitely-near $P^{(i)}$ as $P^{(i+1)}$, and the set of points $P^{(1)},P^{(2)},\dots,P^{(n)}$ as $P\times n$.
    
     
    \begin{lemma}\label{l2}
Let $Z$ be a set of seven (not necessarily distinct) points in $\P^2_k$ admitting an unexpected cubic. Then the points impose independent conditions on cubics, every cubic curve through $Z$ is singular, the general cubic through $Z$ is reduced and irreducible, and for a general point $P$ there is a unique cubic $C_P$ singular at $P$; it is reduced and irreducible.
\end{lemma}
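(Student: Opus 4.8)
The plan is to derive all five assertions from one linear system, $\mathcal{L}:=|I_Z(3)|$, and one numerical input. Since $|Z|=7$ we have $h^0(\P^2_k,I_Z(3))\ge 10-7=3$; write this as $3+t$ with $t\ge 0$. The definition of an unexpected cubic (the case $d+1=3$, so $\binom{d+1}{2}=3$) then says $h^0(\P^2_k,I_{Z+2P}(3))>\max(0,t)$ for general $P$. Writing $\mathcal{L}(-2P)\subseteq\mathcal{L}$ for the subsystem of cubics through $Z$ singular at $P$, so that $\dim\mathcal{L}(-2P)=h^0(I_{Z+2P}(3))-1$, this gives $\dim\mathcal{L}(-2P)\ge 0$ for general $P$ (there is \emph{some} cubic through $Z$ singular at $P$), and $\dim\mathcal{L}(-2P)\ge 1$ whenever $t\ge 1$. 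The crux is the single claim: \emph{for general $P$, $\dim\mathcal{L}(-2P)\ge 1$ is impossible}.

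I would prove that by B\'ezout. If $C_0\ne C_1$ both pass through $Z$ and are singular at a general point $P$, then $C_0\cdot C_1\ge 7+2\cdot 2=11>9$ (the seven points of $Z$ contribute $\ge 7$ and the common singular point $P\notin\operatorname{Supp}Z$ contributes $\ge 4$), so $C_0,C_1$ share a component $D$ with $\deg D\in\{1,2\}$. Then the whole pencil $\langle C_0,C_1\rangle$ equals $D$ plus a pencil of curves of degree $3-\deg D$, all passing through $Z$; tracking which points of $Z$ must lie on $D$ versus on the base locus of the residual pencil (a short case analysis on $\deg D$, and on whether the residual pencil has a fixed component --- the only place the characteristic-$2$ behaviour of the pencil of double lines through a point enters, and it enters harmlessly) forces either at least four points of $Z$ to be collinear, on a line $\ell$, or at least six to lie on a conic. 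In the collinear case every cubic through $Z$ contains $\ell$, so every member of $\mathcal{L}(-2P)$ is $\ell+Q$; since a general $P$ lies off $\ell$, such a cubic is singular at $P$ only if $Q$ is, which forces $Q$ to be a pair of lines through $P$ carrying the remaining points of $Z$ --- impossible for general $P$. The conic case is handled identically. This establishes both \textbf{(1)} (if $t\ge 1$ then $\dim\mathcal{L}(-2P)\ge 1$, a contradiction, hence $h^0(I_Z(3))=3$ and $Z$ imposes independent conditions) and \textbf{(4)} ($\mathcal{L}(-2P)$ is a single reduced point $\{C_P\}$ for general $P$).

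With $\mathcal{L}\cong\P^2$ in hand, let $I=\{(C,P):C\in\mathcal{L},\ P\in\operatorname{Sing}C\}$. The hypothesis makes $I\to\P^2$ dominant, and by \textbf{(4)} its general fibre is one point, so $I$ has a component $I_0$ birational to $\P^2$, in particular two-dimensional. For \textbf{(2)}: if the other projection $I_0\to\mathcal{L}$ were not dominant, its image would be a curve and the general $C_P$ would have one-dimensional singular locus, hence be non-reduced; but a non-reduced cubic $2L+M$ through $Z$ singular at a general $P$ forces (singular locus $\supseteq L$, $P\in L$, $P$ general, so most of $Z$ lies on the fixed line $M$) at least four points of $Z$ collinear, contradicting \textbf{(1)}. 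So $I_0\to\mathcal{L}$ is dominant, hence surjective onto the (closed) set of singular members, so every cubic through $Z$ is singular. For \textbf{(3)}: the general member is singular by \textbf{(2)}, is reduced because a non-reduced cubic through $Z$ again puts four points of $Z$ on a line, and is irreducible because if the general member split as a line plus an irreducible conic, or as three lines, then distributing $Z$ over the moving components (using that a pencil of lines has one base point and a pencil of conics has four) confines the seven points of $Z$ to at most five points --- absurd. Finally \textbf{(5)}: by \textbf{(2)} and \textbf{(3)} the general member of $\mathcal{L}$ is singular, reduced and irreducible, hence has a unique singular point, so under the birational map $I_0\to\mathcal{L}$ the cubic $C_P$ for general $P$ \emph{is} the general member of $\mathcal{L}$, which is reduced and irreducible.

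The step I expect to be the main obstacle is the case analysis inside the B\'ezout argument and its reuse for reducibility: one must check that \emph{every} way the seven points can distribute among the components of a reducible or non-reduced cubic ends in a violation of $h^0(I_Z(3))=3$ or in the non-existence of a cubic singular at a general point. Two points need care throughout: $Z$ may contain infinitely near points, so ``collinear'' means ``lying, possibly as a chain, on a line'' and intersection multiplicities at infinitely near points must be counted correctly in the B\'ezout estimates; and in characteristic $2$ the pencil of double lines through a point is a genuine fixed-component-free pencil, which slightly lengthens the list of reducible pencils to rule out, though each extra case still ends with too many points of $Z$ on a line. Beyond this, the characteristic $2$ versus $3$ distinction is irrelevant to the lemma itself.
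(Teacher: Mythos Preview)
The paper does not actually prove this lemma: it simply observes that Akesseh's proof of the corresponding statement for seven \emph{distinct} points (Lemma~3.2.9 of \cite{A}) goes through verbatim when infinitely near points are allowed. So there is nothing to compare at the level of argument structure; you are supplying the content that the paper outsources. Your route---a dimension count to get $h^0(I_Z(3))=3+t$, then a B\'ezout argument to rule out $\dim\mathcal{L}(-2P)\ge 1$, then an incidence correspondence $I\subset\mathcal{L}\times\P^2$ to propagate singularity from $C_P$ to the general member of $\mathcal{L}$---is the standard one and is essentially what Akesseh does.

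One imprecision to fix: you twice write that four points of $Z$ collinear ``contradicts (1).'' It does not: four collinear points still impose independent conditions on cubics (cubics through them are $\ell+|{\mathcal O}(2)|$, dimension $6=10-4$), so $h^0(I_Z(3))$ can well equal $3$. What four collinear points \emph{do} contradict is the existence of any cubic through $Z$ singular at a general $P$, exactly by the argument you already gave (every such cubic is $\ell+Q$, forcing $Q$ to be a pair of lines through $P$ carrying the remaining three points, which pins $P$ to the union $\overline{ab}\cup\overline{ac}\cup\overline{bc}$). So in both places replace ``contradicting (1)'' with ``contradicting the unexpectedness hypothesis''; the logic is then clean. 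Similarly, in your argument for (2) you assert that $M$ is ``fixed'' without justification---if both $L_P$ and $M_P$ vary, you need the extra step of intersecting $2L_P+M_P$ with $2L_{P'}+M_{P'}$ for two general $P,P'$ and checking that $Z$ cannot live on the (at most four, with controlled infinitesimal structure) resulting points.

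You have correctly located the genuine work: the B\'ezout case analysis and its reuse for reducibility, done carefully for $Z$ containing infinitely near points. That is exactly the verification the paper is asking the reader to import from Akesseh and redo in the more general setting, and your outline of it is sound.
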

\begin{proof}
The proof of Lemma 3.2.9 in Akesseh \cite{A} carries over to the case that $Z$ contains infinitely near points.
\end{proof}

\begin{prop}\label{uq}
Let $Z=\{P_1,\dots,P_7\}$ be a set of seven (not necessarily distinct) points in $\P^2_k$ admitting an unexpected cubic. Let $P\in\P^2$ be general. Then the set $\Pcal_P$ of cubic curves containing $Z\cup\{P\}$ is a quasi-elliptic fibration. Furthermore, this fibration can be viewed as a surface that is isomorphic to the blowup of $\P^2_k$ at nine points: the seven points of $Z$, the point $P$ and a point infinitely-near $P$.
\end{prop}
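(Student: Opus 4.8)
The plan is to prove three things in turn — that $\Pcal_P$ is genuinely a pencil, that resolving its base locus requires exactly the nine stated blow-ups, and that the resulting fibred surface meets the definition of a quasi-elliptic fibration — the one substantial input being Lemma \ref{l2}. For the first, since $Z$ imposes independent conditions on cubics we have $h^0(\P^2,I_Z(3))=10-7=3$, so $|I_Z(3)|$ is a net; its general member is irreducible (Lemma \ref{l2}), so the members share no common component and the base locus of the net is finite, whence a general $P$ imposes one further independent condition and $h^0(\P^2,I_{Z\cup\{P\}}(3))=2$, i.e. $\Pcal_P\cong\P^1$ is a pencil. Every member passes through $Z$ and is hence singular by Lemma \ref{l2}, and a standard generic-fibre argument — the incidence variety $\{(C,Q):Z\cup\{Q\}\subset C\}$ is irreducible and dominates the $Q$-plane with fibre $\Pcal_Q$, while the non-reduced-or-reducible cubics form a proper closed subset of $|I_Z(3)|$ by Lemma \ref{l2} — shows that for general $P$ the general member of $\Pcal_P$ is a reduced irreducible singular plane cubic.

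Next I would pin down the base scheme $B$ of $\Pcal_P$ using the special member $C_P$ of Lemma \ref{l2}, the unique cubic through $Z$ singular at $P$, which lies in $\Pcal_P$ since it passes through $P$, together with a general member $C$; these are distinct irreducible cubics, so $C\cdot C_P$ is a length-$9$ zero-dimensional scheme. Along $Z$ it has length at least $7$ (the contact forced by passing through $Z$) and at $P$ length at least $m_P(C)\,m_P(C_P)=1\cdot 2=2$; since $7+2=9$ both bounds are equalities, so $B$ is the subscheme supported on $Z$ together with a curvilinear length-$2$ subscheme at $P$, which a short local expansion identifies with $P$ plus the infinitely-near point $P'$ in the common tangent direction of the members of $\Pcal_P$ at $P$. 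Blowing up the seven points of $Z$, then $P$, then $P'$ resolves $B$: two general members then have strict transforms with intersection number $9-9=0$ and no common component, hence are disjoint, so the pencil becomes base-point-free and induces a morphism $f\colon X\to\P^1=\Pcal_P$, with $X$ precisely the blow-up of $\P^2$ at the nine stated points.

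Finally, the general fibre of $f$ is the strict transform on $X$ of a general member $C\in\Pcal_P$, which is a reduced irreducible cubic with a single node or cusp; its singular point is none of the nine centres — not $P$ or $P'$, since $C\ne C_P$ forces $C$ smooth at $P$ and hence its strict transform smooth at $P'$ and transverse to the exceptional curve, and not a point of $Z$, since ``singular at $P_i$'' is closed on $\Pcal_P\cong\P^1$, so were a general member singular at $P_i$ then $C_P$ would be too, making $C_P$ singular at the two distinct points $P$ and $P_i$ and hence reducible, contradicting Lemma \ref{l2}. Thus all nine centres are smooth points of the successive strict transforms of $C$, the strict transform of $C$ on $X$ is isomorphic to $C$, the general fibre of $f$ is a singular plane cubic, and $f$ is quasi-elliptic. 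I expect the main obstacle to be the base-scheme analysis of the second paragraph — establishing that the ninth base point is precisely one point infinitely near $P$ with curvilinear length-$2$ local structure and nothing worse — which leans on the genericity of $P$ and the irreducibility of $C_P$; a lesser nuisance is the genericity bookkeeping needed to pass from Lemma \ref{l2}'s statements about cubics through $Z$ to the corresponding statements for cubics through $Z\cup\{P\}$.
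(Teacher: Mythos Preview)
Your proof is correct and follows essentially the same approach as the paper: establish that $\Pcal_P$ is a pencil using the independent conditions from Lemma \ref{l2}, then compute $C\cdot C_P$ to identify the nine base points as $P_1,\dots,P_7,P$, and one point infinitely near $P$. Your third paragraph, verifying that the singular point of a general member is not among the nine centres (so the general fibre of $f$ remains singular), fills in a step the paper leaves implicit when it passes directly from ``singular but fixed component free'' to ``quasi-elliptic fibration.''
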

\begin{proof}
Since $Z$ imposes independent conditions on cubics by Lemma \ref{l2} and since $P$ is general, $\Pcal_P$ is a linear pencil of cubics. By Lemma \ref{l2}, the cubics are singular but fixed component free, so blowing up the vase points gives a quasi-elliptic fibration. Since the general member of the pencil is a cubic, there are 9 base points. By Lemma \ref{l2}, the cubics containing $Z\cup\{P\}$ contain $C_P$ and the general one, $C$, is reduced and irreducible. Since $C.C_P=P_1+\cdots+P_7+2P$, the 9 base points are $P_1,\dots,P_7,P,P_9$, where $P_9$ is infinitely near $P$.\end{proof}



\section{Quasi-Elliptic Fibrations in Characteristic 2}

Given a quasi-elliptic fibration $X$, one may construct a multigraph $D$ the following way: the vertex set is the set of curves on $X$ with a self-intersection of $-2$, and the number of edges connecting two vertices is equal to the multiplicity of the intersection of their respective curves. The multigraph $D$ constructed this way is called the \textbf{Dynkin diagram} of $X$.

The possible Dynkin diagrams for quasi-elliptic fibrations in characteristic 2 are $\wilde{A}_1^{\+8}$, $\wilde{A}_1^{\+4}\+\wilde{D}_4$, $\wilde{A}_1^{\+2}\+\wilde{D}_6$, $\wilde{D}_4^{\+2}$, $\wilde{A}_1\+\wilde{E}_7$, $\wilde{D}_8$, and $\wilde{E}_8$ \cite{CD}, Proposition 5.6.3, or \cite{CDL} Corollary 4.3.22. We will show that there are 2 ways to blow down $\wilde{A}_1^{\+8}$ to $\P^2$, 4 ways to blow down $\wilde{A}_1^{\+4}\+\wilde{D}_4$, 4 ways to blow down $\wilde{A}_1^{\+2}\+\wilde{D}_6$, 2 ways to blow down $\wilde{D}_4^{\+2}$, 2 ways to blow down $\wilde{A}_1\+\wilde{E}_7$, 2 ways to blow down $\wilde{D}_8$, and 1 way to blow down $\wilde{E}_8$.
\begin{center}
    \textbf{Case 1: $\wilde{A}_1^{\+8}$}
\end{center}

Let us start with $\wilde{A}_1^{\+8}$. We can get this Dynkin diagram by blowing up $P_1^{(1)}$, $P_1^{(2)}$, $P_2^{(1)}$, $P_2^{(2)}$, $P_3^{(1)}$, $P_3^{(2)}$, $P_4^{(1)}$, $P_4^{(2)}$, $P_5$ where $P_n^{(2)}$ is infinitely-near $P_n^{(1)}$ in the direction of $P_5$. Let us call this surface $X$. Note $\Pic X=\langle \ell,e_1,\dots,e_9\rangle$. We will draw the Dynkin diagram below:

\begin{tikzpicture}
    \node[shape=circle,draw=black,fill=black,label=$\ell-e_1-e_2-e_7$] (F) at (0,0) {} ;
    \node[shape=circle,draw=black,fill=black,label=below:$2\ell-e_3-e_4-e_5-e_6-e_8-e_9$] (G) at (3,0) {} ;
    \node[shape=circle,draw=black,fill=black,label=$\ell-e_3-e_4-e_7$] (H) at (0,-2) {} ;
    \node[shape=circle,draw=black,fill=black,label=below:$2\ell-e_1-e_2-e_5-e_6-e_8-e_9$] (I) at (3,-2) {} ;
    \node[shape=circle,draw=black,fill=black,label=$\ell-e_5-e_6-e_7$] (J) at (0,-4) {} ;
    \node[shape=circle,draw=black,fill=black,label=below:$2\ell-e_1-e_2-e_3-e_4-e_8-e_9$] (K) at (3,-4) {} ;
    \node[shape=circle,draw=black,fill=black,label=$\ell-e_7-e_8-e_9$] (L) at (0,-6) {} ;
    \node[shape=circle,draw=black,fill=black,label=below:$2\ell-e_1-e_2-e_3-e_4-e_5-e_6$] (M) at (3,-6) {} ;
    
    \node[shape=circle,draw=black,fill=black,label=$3\ell-2e_1-e_3-e_4-e_5-e_6-e_7-e_8-e_9$] (A) at (9,0) {} ;
    \node[shape=circle,draw=black,fill=black,label=below:$e_1-e_2$] (B) at (12,0) {} ;
    \node[shape=circle,draw=black,fill=black,label=$3\ell-e_1-e_2-2e_3-e_5-e_6-e_7-e_8-e_9$] (C) at (9,-2) {} ;
    \node[shape=circle,draw=black,fill=black,label=below:$e_3-e_4$] (D) at (12,-2) {} ;
    \node[shape=circle,draw=black,fill=black,label=$3\ell-e_1-e_2-e_3-e_4-2e_5-e_7-e_8-e_9$] (E) at (9,-4) {} ;
    \node[shape=circle,draw=black,fill=black,label=below:$e_5-e_6$] (N) at (12,-4) {} ;
    \node[shape=circle,draw=black,fill=black,label=$3\ell-e_1-e_2-e_3-e_4-e_5-e_6-e_7-2e_8$] (O) at (9,-6) {} ;
    \node[shape=circle,draw=black,fill=black,label=below:$e_8-e_9$] (P) at (12,-6) {} ;

    \path [-,draw=black, bend left=10](F) edge node[left] {} (G);
    \path [-,draw=black, bend left=10](H) edge node[left] {} (I);
    \path [-,draw=black, bend left=10](J) edge node[left] {} (K);
    \path [-,draw=black, bend left=10](L) edge node[left] {} (M);
    \path [-,draw=black, bend left=10](A) edge node[left] {} (B);
    \path [-,draw=black, bend left=10](C) edge node[left] {} (D);
    \path [-,draw=black, bend left=10](E) edge node[left] {} (N);
    \path [-,draw=black, bend left=10](O) edge node[left] {} (P);
    
    \path [-,draw=black, bend right=10](F) edge node[left] {} (G);
    \path [-,draw=black, bend right=10](H) edge node[left] {} (I);
    \path [-,draw=black, bend right=10](J) edge node[left] {} (K);
    \path [-,draw=black, bend right=10](L) edge node[left] {} (M);
    \path [-,draw=black, bend right=10](A) edge node[left] {} (B);
    \path [-,draw=black, bend right=10](C) edge node[left] {} (D);
    \path [-,draw=black, bend right=10](E) edge node[left] {} (N);
    \path [-,draw=black, bend right=10](O) edge node[left] {} (P);
\end{tikzpicture}


Using Lemma \ref{MW}, we know that there are 16 $(-1)$-curves. The 16 $(-1)$-curves are $e_2$, $e_4$, $e_6$, $e_7$, $e_9$, $\ell-e_1-e_3$, $\ell-e_1-e_5$, $\ell-e_1-e_8$, $\ell-e_3-e_5$, $\ell-e_5-e_8$, $2\ell-e_1-e_2-e_3-e_5-e_8$, $2\ell-e_1-e_3-e_4-e_5-e_8$, $2\ell-e_1-e_3-e_5-e_6-e_8$, $2\ell-e_1-e_3-e_5-e_7-e_8$, and $2\ell-e_1-e_3-e_5-e_8-e_9$. 

Taking the $(-2)$-curves and the $(-1)$-curves together, we can form a multigraph on 32 vertices that yields the following adjacency matrix.
\setcounter{MaxMatrixCols}{20}
$$\mathbf{\Gamma}=\begin{pmatrix}\mathbf{B}&\mathbf{M}\\ \mathbf{M}^\mathsf{T}&\mathbf{R}\end{pmatrix},$$ where
\begin{center}
    $$\mathbf{B}=-2\mathbf{I}_{16}+2\mathbf{J}_{16},$$
\end{center}
represents the intersection products between the $(-2)$-curves ($\mathbf{I}_{16}$ is the $16\times 16$ identity matrix and $\mathbf{J}_{16}$ is the $16\times 16$ exchange matrix),
\begin{center}
    $$\mathbf{R}=-\mathbf{I}_{16}+\mathbf{J}_{16}$$
\end{center}
represents the intersection products between the $(-1)$-curves, and $$\mathbf{M}=\left(\begin{matrix}
0 & 1 & 0 & 1 & 1 & 1 & 1 & 1 & 0 & 0 & 0 & 0 & 0 & 1 & 0 & 1 \\
1 & 0 & 0 & 1 & 1 & 1 & 0 & 0 & 1 & 1 & 0 & 0 & 0 & 1 & 1 & 0 \\
0 & 0 & 1 & 1 & 0 & 1 & 0 & 1 & 0 & 1 & 0 & 1 & 0 & 0 & 1 & 1 \\
1 & 1 & 0 & 1 & 0 & 0 & 0 & 1 & 0 & 1 & 1 & 1 & 0 & 1 & 0 & 0 \\
0 & 1 & 1 & 1 & 1 & 0 & 0 & 0 & 1 & 1 & 1 & 0 & 0 & 0 & 0 & 1 \\
1 & 1 & 1 & 1 & 0 & 1 & 1 & 0 & 1 & 0 & 0 & 1 & 0 & 0 & 0 & 0 \\
0 & 0 & 0 & 1 & 0 & 0 & 1 & 0 & 1 & 0 & 1 & 1 & 0 & 1 & 1 & 1 \\
1 & 0 & 1 & 1 & 1 & 0 & 1 & 1 & 0 & 0 & 1 & 0 & 0 & 0 & 1 & 0 \\
0 & 1 & 0 & 0 & 0 & 1 & 0 & 0 & 1 & 1 & 0 & 1 & 1 & 1 & 0 & 1 \\
1 & 1 & 1 & 0 & 1 & 1 & 0 & 1 & 0 & 1 & 0 & 0 & 1 & 0 & 0 & 0 \\
0 & 0 & 0 & 0 & 1 & 0 & 0 & 1 & 0 & 1 & 1 & 0 & 1 & 1 & 1 & 1 \\
1 & 0 & 0 & 0 & 0 & 1 & 1 & 1 & 0 & 0 & 0 & 1 & 1 & 1 & 1 & 0 \\
0 & 0 & 1 & 0 & 1 & 1 & 1 & 0 & 1 & 0 & 0 & 0 & 1 & 0 & 1 & 1 \\
1 & 1 & 0 & 0 & 1 & 0 & 1 & 0 & 1 & 0 & 1 & 0 & 1 & 1 & 0 & 0 \\
0 & 1 & 1 & 0 & 0 & 0 & 1 & 1 & 0 & 0 & 1 & 1 & 1 & 0 & 0 & 1 \\
1 & 0 & 1 & 0 & 0 & 0 & 0 & 0 & 1 & 1 & 1 & 1 & 1 & 0 & 1 & 0
\end{matrix}\right)$$ 
represents the intersection products between the $(-1)$- and $(-2)$-curves. 



\begin{prop}\label{blowdowns}
There are-- up to isomorphism of the adjacency diagram of $X$-- two ways to blow down $X$ to the projective plane $\P^2$.
\end{prop}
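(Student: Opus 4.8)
A "blow-down of $X$ to $\P^2$" is a sequence of contractions of $(-1)$-curves; after contracting one $(-1)$-curve $e$ we get a new surface $X'$ whose $(-1)$-curves, $(-2)$-curves, and their intersection data are all computable from the combinatorial data of $X$ recorded in the adjacency matrix $\mathbf{\Gamma}$ (a curve class $C$ on $X$ descends to $X'$, and its new self-intersection is $C^2 + (C\cdot e)^2$; two classes $C,D$ meet in $C\cdot D + (C\cdot e)(D\cdot e)$ downstairs). So the whole process is governed by $\mathbf{\Gamma}$, and an "isomorphism of the adjacency diagram of $X$" is a graph automorphism of $\mathbf{\Gamma}$ preserving the bipartition into $(-2)$- and $(-1)$-curves. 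The claim is that, modulo this automorphism group, there are exactly two orbits of maximal blow-down sequences $X = X_9 \to X_8 \to \cdots \to X_0 = \P^2$.

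**Key steps, in order.** First I would pin down the automorphism group $G = \Aut(\mathbf{\Gamma})$ of the bipartite multigraph on the $16+16$ vertices; since $\mathbf{B} = -2\mathbf{I}+2\mathbf{J}$ and $\mathbf{R} = -\mathbf{I}+\mathbf{J}$ already force a pairing structure among the $(-2)$'s and among the $(-1)$'s (each curve has a unique "partner" it does not meet), $G$ will be a subgroup of the wreath-type symmetries permuting these pairs, cut down by the requirement of respecting $\mathbf{M}$. Second, I would analyze which $(-1)$-curves can be contracted first: a $(-1)$-curve $e$ is contractible, and after contracting it the images of the eight $(-2)$-curves not meeting $e$ remain $(-2)$-curves forming the Dynkin diagram of $X_8$, while the eight $(-2)$-curves meeting $e$ have their self-intersection raised. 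Here one uses that $X$ being the blow-up of $\P^2$ at nine points means $X_8$ must be a (possibly weak) del Pezzo-type surface of degree $1$; the constraint that the remaining $(-2)$-configuration be realizable on such a surface severely limits the choices, and $G$ acts transitively (or with two orbits) on the admissible first contractions. Third, I would iterate: at each stage the residual combinatorial data, together with the stabilizer in $G$ of the choices made so far, determines the admissible next contractions; I expect the branching to collapse quickly, so that after two or three contractions the remaining sequence down to $\P^2$ is essentially forced, up to the action of the residual symmetry. Tracking this tree of choices and quotienting by $G$ at each node yields the count of two.

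**The main obstacle.** The delicate part is not any single contraction but the bookkeeping of the symmetry group: I must compute $\Aut(\mathbf{\Gamma})$ honestly (this is a finite check on the explicit matrix $\mathbf{M}$, but easy to botch), and then at each stage correctly compute the stabilizer of the partial blow-down so that distinct-looking sequences that are actually $G$-equivalent get identified, and genuinely distinct ones do not. A clean way to organize this is to attach to each partial blow-down $X_j$ the isomorphism type of its Dynkin diagram (as a decorated subgraph of the original), use known classifications of which sub-diagrams occur on weak del Pezzo surfaces of each degree to prune impossible branches, and only invoke the full $G$-action at the leaves. The secondary obstacle is verifying that the two resulting blow-down types really are inequivalent — for that I would exhibit an invariant distinguishing them, e.g. the sequence of Dynkin types $(X_9, X_8, \ldots)$ encountered, or the configuration of the nine points in $\P^2$ (all the ``doubled'' infinitely-near structure versus a different distribution), which by construction is a $G$-invariant of the sequence.
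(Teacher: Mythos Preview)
Your plan is a valid general strategy, but it takes a substantially heavier route than the paper's proof and misses the structural shortcut that makes the argument clean.

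The paper does \emph{not} compute $\Aut(\mathbf{\Gamma})$ at all, nor does it track orbits or stabilizers through a tree of partial blow-downs, nor does it appeal to any classification of configurations on weak del Pezzo surfaces. Instead it proceeds by a direct counting argument. First, since the sixteen $(-1)$-curves fall into eight disjoint ``partner'' pairs, at most eight pairwise-disjoint $(-1)$-curves exist, so any blow-down of nine curves must use at least one $(-2)$-curve. This forces the first two contractions to be (up to symmetry) some $(-1)$-curve $R_1$ followed by an adjacent $(-2)$-curve $B_1$. After these two contractions, the paper simply tabulates which of the remaining curves have had their self-intersection raised to $\geq 0$ and discards them; what survives is a $3$-regular bipartite graph on seven $(-1)$-vertices and seven $(-2)$-vertices. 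From there the remaining seven contractions are classified by elementary pigeonhole on this $3$-regular graph: the only possibilities are to take all seven $(-1)$-vertices, or one $(-1)$-vertex together with three $(-1)$--$(-2)$ pairs, and the intermediate cases are excluded by short adjacency counts.

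What your approach buys is generality: a symmetry-first, orbit-enumeration framework that would work uniformly across all the Dynkin types in the paper. What the paper's approach buys is brevity and transparency: the observation ``at least one $(-2)$-curve must be contracted'' plus the concrete reduction to the $7+7$ bipartite residual graph replaces all of your automorphism-group bookkeeping with a handful of lines. Your invocation of weak del Pezzo classifications is also unnecessary here and somewhat risky, since the intermediate surfaces need not fit those classifications cleanly; the paper stays entirely inside the combinatorics of $\mathbf{\Gamma}$.
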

\begin{proof}
By Castelnuovo's contraction theorem, one can blow down a curve $C$ on a smooth projective surface $X$ to get a smooth projective surface if and only if the self-intersection number of $C$ is $-1$ \cite{Hartshorne} V.5.7.2. Let us denote by $B$ the set of $(-2)$-curves and by $R$ the set of $(-1)$-curves.

First choose one $(-1)$-curve $R_1$ to blow down. Note that since we are blowing down to $\P^2$, we will need to blow down exactly nine curves. Once we blow down a $(-1)$-curve, the single adjacent $(-1)$-curve as given by $\mathbf{\Gamma}$ cannot be blown down since its self-intersection number would increase. There are eight pairs of adjacent $(-1)$ curves. Therefore, if we blow down eight disjoint $(-1)$-curves, we would still need to blow down one of the $(-2)$-curves as the ninth. Thus there must be at least one $(-2)$-vertex present in the blow-down. Choose $B_1$ adjacent to $R_1$ to blow down.

Now let us take an inventory of which curves cannot be blown down. The single $(-2)$-curve adjacent to $B_1$ cannot be blown down, because it intersects $B_1$ with multiplicity 2, so its self-intersection number has increased to 0. In addition, the other 7 $(-1)$-curves adjacent to $B_1$ cannot be blown down, because their self-intersection have also increased to 0. Finally, the other 7 $(-2)$-curves adjacent to $R_1$ cannot be blown down, because their self intersection increases to $(-1)$ after $R_1$ is blown down, but since $B_1$ also intersects $R_1$, their self intersection would then increase to 0 after $B_1$ is blown down.

What is left of $\mathbf{\Gamma}$ after deleting all the vertices whose self-intersection number has risen to a nonnegative value after blowing down $R_1$ and $B_1$ is a three-regular bipartite graph on 14 vertices containing 7 disjoint $(-1)$-vertices and 7 disjoint $(-2)$-vertices. 

In this subgraph, the two possibilities for blow-downs are to blow down the 7 disjoint $(-1)$-vertices, or to blow down one $(-1)$-vertex and three $(-1)$-$(-2)$ pairs.

It is impossible to blow down three disjoint $(-1)$ vertices and two $(-1)$-$(-2)$ pairs because there do not exist three $(-1)$-vertices that no two $(-2)$-vertices are adjacent to.

It is impossible to blow down five disjoint $(-1)$-vertices and one $(-1)$-$(-2)$ pair because every $(-2)$-vertex is adjacent to three $(-1)$-vertices: if one has already been blown down, then there are two of the remaining six connected to it, leaving four out of six not connected to it. This is less than the five required, so this blow down is impossible. Thus there are only two types of blow-downs.
\end{proof}

Note that we can run this procedure algebraically by performing the following computations on the adjacency matrix $\mathbf{\Gamma}$. We use the fact that blowing down the curve $R_1$ increases the self-intersection number of a curve $C$ by $(C.R_1)^2$ and increases the intersection product of two general divisors $C$ and $D$ by $(C.R_1)\cdot(D.R_1)$, \cite{Hartshorne} Proposition V.3.2(d) and Proposition V.3.6. We can thus form a new graph $\mathbf{\Gamma}_{R_1}$ induced by blowing down $R_1$ by using the column $\mathbf{r}_1$ of $\mathbf{\Gamma}$ corresponding to the vertex representing $R_1$ and computing $\mathbf{r}_1\mathbf{r}_1^{\mathsf{T}}+\mathbf{\Gamma}$. Then the row and column corresponding to $R_1$ are now completely zero, and the $(i,j)$-entry of $\mathbf{\Gamma}_{R_1}$ is the intersection product of the curves corresponding to $\mathbf{\Gamma}_i$ and $\mathbf{\Gamma}_j$ after having blown down $R_1$. We may repeat this procedure until we blow down to $\P^2$, when $\mathbf{\Gamma}$ will have been transformed into a matrix with no negative entries.

The two blow-downs are given by the following diagrams. The $(-1)$-curves are illustrated by the hollow vertices, and the $(-2)$-curves are illustrated by the solid black vertices.
\begin{center}
\begin{tikzpicture}
    \node[shape=circle,draw=black,fill=none, label=$e_9$] (A) at (0,0) {} ;
    \node[shape=circle,draw=black,fill=black, label=$e_8-e_9$] (B) at (1.5,0) {} ;
    \node[shape=circle,draw=black,fill=none,label=$e_7$] (C) at (3,0) {} ;
    \node[shape=circle,draw=black,fill=none, label=$e_6$] (D) at (4.5,0) {} ;
    \node[shape=circle,draw=black,fill=black, label=$e_5-e_6$] (E) at (6,0) {} ;
    \node[shape=circle,draw=black,fill=none, label=$e_4$] (F) at (7.5,0) {} ;
    \node[shape=circle,draw=black,fill=black, label=$e_3-e_4$] (G) at (9,0) {} ;
    \node[shape=circle,draw=black,fill=none, label=$e_2$] (H) at (10.5,0) {} ;
    \node[shape=circle,draw=black,fill=black, label=$e_1-e_2$] (I) at (12,0) {} ;

    \path [-,draw=black](A) edge node[left] {} (B);
    \path [-,draw=black](D) edge node[left] {} (E);
    \path [-,draw=black](F) edge node[left] {} (G);
    \path [-,draw=black](H) edge node[left] {} (I);
\end{tikzpicture}
\end{center}

\begin{center}
\begin{tikzpicture}
    \node[shape=circle,draw=black,fill=none, label=$e_9$] (A) at (0,0) {} ;
    \node[shape=circle,draw=black,fill=black, label=below:$e_8-e_9$] (B) at (1.5,0) {} ;
    \node[shape=circle,draw=black,fill=none, label=$e_7$] (C) at (3,0) {} ;
    \node[shape=circle,draw=black,fill=none, label=$e_6$] (D) at (4.5,0) {} ;
    \node[shape=circle,draw=black,fill=none, label=$e_4$] (E) at (6,0) {} ;
    \node[shape=circle,draw=black,fill=none, label=$e_2$] (F) at (7.5,0) {} ;
    \node[shape=circle,draw=black,fill=none, label=below:$\ell-e_1-e_3$] (G) at (9,0) {} ;
    \node[shape=circle,draw=black,fill=none, label=$\ell-e_1-e_5$] (H) at (10.5,0) {} ;
    \node[shape=circle,draw=black,fill=none, label=below:$\ell-e_3-e_5$] (I) at (12,0) {} ;

    \path [-,draw=black](A) edge node[left] {} (B);
\end{tikzpicture}
\end{center}
The first blow-down diagram corresponds to the exact points on $\P^2$ that were blown up to obtain the surface $X$ in the first place.

The second diagram induces a map $A:\Pic X\longrightarrow \Pic X$ given by the matrix $$A=\left(\begin{matrix}
2 & 1 & 0 & 1 & 0 & 1 & 0 & 0 & 0 & 0 \\
-1 & 0 & 0 & -1 & 0 & -1 & 0 & 0 & 0 & 0 \\
0 & 0 & 1 & 0 & 0 & 0 & 0 & 0 & 0 & 0 \\
-1 & -1 & 0 & 0 & 0 & -1 & 0 & 0 & 0 & 0 \\
0 & 0 & 0 & 0 & 1 & 0 & 0 & 0 & 0 & 0 \\
-1 & -1 & 0 & -1 & 0 & 0 & 0 & 0 & 0 & 0 \\
0 & 0 & 0 & 0 & 0 & 0 & 1 & 0 & 0 & 0 \\
0 & 0 & 0 & 0 & 0 & 0 & 0 & 1 & 0 & 0 \\
0 & 0 & 0 & 0 & 0 & 0 & 0 & 0 & 1 & 0 \\
0 & 0 & 0 & 0 & 0 & 0 & 0 & 0 & 0 & 1
\end{matrix}\right).$$

The map $A$ sends each $(-1)$-curve shown in the second blow-down diagram to a $(-1)$-curve of the form $e_i$ and each $(-2)$-curve shown to a $(-2)$-curve of the form $e_i-e_j$. This allows us to express all curves in $\Pic(X)$ as $\Z$-linear combinations of the curves of the second blow-down diagram, thus allowing us to learn what configuration of points in $\P^2_k$ we are required to blow up to attain $X$. In section 3, we will determine which configurations yield unexpected cubics.

Applying $A$ to the Dynkin diagram gives us a new Dynkin diagram shown below. To save space, we will denote $3\ell-e_1-e_2-e_3-e_4-e_5-e_6-e_7-2e_8$ as $-K-e_7+e_8$, where $K=-3\ell+e_1+\cdots+e_9$ is the canonical divisor and rename $2\ell-e_{i_1}-e_{i_2}-e_{i_3}-e_{i_4}-e_{i_5}-e_{i_6}$ as $2\ell-i_1i_2i_3i_4i_5i_6$, and $\ell-e_{i_1}-e_{i_2}-e_{i_3}$ as $\ell-i_1i_2i_3$.

\begin{adjustwidth*}{}{-0em}
\begin{tikzpicture}
    \node[shape=circle,draw=black,fill=black,label=$\ell-127$] (F) at (0,0) {} ;
    \node[shape=circle,draw=black,fill=black,label=below:$2\ell-345689$] (G) at (3,0) {} ;
    \node[shape=circle,draw=black,fill=black,label=$\ell-347$] (H) at (0,-2) {} ;
    \node[shape=circle,draw=black,fill=black,label=below:$2\ell-125689$] (I) at (3,-2) {} ;
    \node[shape=circle,draw=black,fill=black,label=$\ell-567$] (J) at (0,-4) {} ;
    \node[shape=circle,draw=black,fill=black,label=below:$2\ell-123489$] (K) at (3,-4) {} ;
    \node[shape=circle,draw=black,fill=black,label=$2\ell-135789$] (L) at (0,-6) {} ;
    \node[shape=circle,draw=black,fill=black,label=below:$\ell-246$] (M) at (3,-6) {} ;
    
    \node[shape=circle,draw=black,fill=black,label=$2\ell-146789$] (A) at (9,0) {} ;
    \node[shape=circle,draw=black,fill=black,label=below:$\ell-235$] (B) at (12,0) {} ;
    \node[shape=circle,draw=black,fill=black,label=$2\ell-236789$] (C) at (9,-2) {} ;
    \node[shape=circle,draw=black,fill=black,label=below:$\ell-145$] (D) at (12,-2) {} ;
    \node[shape=circle,draw=black,fill=black,label=$2\ell-245789$] (E) at (9,-4) {} ;
    \node[shape=circle,draw=black,fill=black,label=below:$\ell-136$] (N) at (12,-4) {} ;
    \node[shape=circle,draw=black,fill=black,label=$-K-e_7+e_8$] (O) at (9,-6) {} ;
    \node[shape=circle,draw=black,fill=black,label=below:$e_8-e_9$] (P) at (12,-6) {} ;

    \path [-,draw=black, bend left=10](F) edge node[left] {} (G);
    \path [-,draw=black, bend left=10](H) edge node[left] {} (I);
    \path [-,draw=black, bend left=10](J) edge node[left] {} (K);
    \path [-,draw=black, bend left=10](L) edge node[left] {} (M);
    \path [-,draw=black, bend left=10](A) edge node[left] {} (B);
    \path [-,draw=black, bend left=10](C) edge node[left] {} (D);
    \path [-,draw=black, bend left=10](E) edge node[left] {} (N);
    \path [-,draw=black, bend left=10](O) edge node[left] {} (P);
    
    \path [-,draw=black, bend right=10](F) edge node[left] {} (G);
    \path [-,draw=black, bend right=10](H) edge node[left] {} (I);
    \path [-,draw=black, bend right=10](J) edge node[left] {} (K);
    \path [-,draw=black, bend right=10](L) edge node[left] {} (M);
    \path [-,draw=black, bend right=10](A) edge node[left] {} (B);
    \path [-,draw=black, bend right=10](C) edge node[left] {} (D);
    \path [-,draw=black, bend right=10](E) edge node[left] {} (N);
    \path [-,draw=black, bend right=10](O) edge node[left] {} (P);
\end{tikzpicture}
\end{adjustwidth*}
\begin{itemize}
\item The original Dynkin diagram corresponds to the blowup of $\P^2$ at five ordinary points $P_1^{(1)}$ through $P_5^{(1)}$ and four infinitely-near points $P_1^{(2)},\dots,P_4^{(2)}$ where each infinitely-near point $P_i^{(2)}$ corresponds to the line containing the ordinary points $P_i^{(1)}$ and $P_5$. An example of a pencil of cubics with base locus at such an arrangement is that spanned by $a^2(xy+xz+yz)+(ab+ac+bc)x^2$ and $b^2(xy+xz+yz)+(ab+ac+bc)y^2$, where $(a,b,c)\in\P^2_{\overline{k}}$ may be a generic point.

\item The second diagram corresponds to seven points blown up in a Fano plane configuration, and a general eighth point blown up twice. An example of a pencil of cubics whose base locus is such an arrangement is that spanned by $(ac+c^2)(x^2y+xy^2)+(ab+b^2)(x^2z+xz^2)$ and $(bc+c^2)(x^2y+xy^2)+(ab+b^2)(y^2z+yz^2)$, where $(a,b,c)\in\P^2_{\overline{k}}$ may be a generic point.

\end{itemize}

\begin{center}
    \textbf{Case 2:} $\wilde{A}_1^{\+4}\+\wilde{D}_4$
\end{center}

Let's now look at the Dynkin diagram $\wilde{A}_1^{\+4}\+\wilde{D}_4$, as depicted below. 

\begin{adjustwidth*}{}{-0em}
\begin{tikzpicture}
    \node[shape=circle,draw=black,fill=black,label=$e_6-e_7$] (A) at (0,0) {};
    \node[shape=circle,draw=black,fill=black,label=$e_8-e_9$] (B) at (4,0) {};
    \node[shape=circle,draw=black,fill=black,label=left:$e_7-e_8$] (C) at (2,-2) {};
    \node[shape=circle,draw=black,fill=black,label=below:$2\ell-123467$] (D) at (0,-4) {};
    \node[shape=circle,draw=black,fill=black,label=below:$\ell-567$] (E) at (4,-4) {};
    \node[shape=circle,draw=black,fill=black,label=$\ell-125$] (F) at (6,-1) {} ;
    \node[shape=circle,draw=black,fill=black,label=below:$2\ell-346789$] (G) at (11,-1) {} ;
    \node[shape=circle,draw=black,fill=black,label=$\ell-345$] (H) at (6,-2) {} ;
    \node[shape=circle,draw=black,fill=black,label=below:$2\ell-126789$] (I) at (11,-2) {} ;
    \node[shape=circle,draw=black,fill=black,label=$e_3-e_4$] (J) at (6,-3) {} ;
    \node[shape=circle,draw=black,fill=black,label=below:$-K-e_3+e_4$] (K) at (11,-3) {} ;
    \node[shape=circle,draw=black,fill=black,label=$e_1-e_2$] (L) at (6,-4) {} ;
    \node[shape=circle,draw=black,fill=black,label=below:$-K-e_1+e_2$] (M) at (11,-4) {} ;

    \path [-](B) edge node[left] {} (C);
    \path [-](A) edge node[left] {} (C);
    \path [-](D) edge node[left] {} (C);
    \path [-](E) edge node[left] {} (C);
    \path [-,draw=black, bend left=10](F) edge node[left] {} (G);
    \path [-,draw=black, bend right=10](F) edge node[left] {} (G);
    \path [-,draw=black, bend left=10](H) edge node[left] {} (I);
    \path [-,draw=black, bend right=10](H) edge node[left] {} (I);
    \path [-,draw=black, bend left=10](J) edge node[left] {} (K);
    \path [-,draw=black, bend right=10](J) edge node[left] {} (K);
    \path [-,draw=black, bend left=10](L) edge node[left] {} (M);
    \path [-,draw=black, bend right=10](L) edge node[left] {} (M);
    
\end{tikzpicture}
\end{adjustwidth*}

Using Lemma \ref{MW}, we know there are eight $(-1)$-curves; they are $e_2$, $e_4$, $e_5$, $e_9$, $\ell-e_1-e_3$, $\ell-e_1-e_6$, $\ell-e_3-e_6$, and $2\ell-e_1-e_3-e_6-e_7-e_8$. 

A similar analysis as in Proposition \ref{blowdowns} reveals that there are-- up to isomorphism of the graph-- four ways to blow down the surface to $\P^2$. The details of this analysis have been omitted for the sake of brevity. The four ways to blow down this diagram (up to isomorphism) are given by the following diagrams. 
\begin{center}
\begin{tikzpicture}
    \node[shape=circle,draw=black,fill=none, label=$e_2$] (A) at (0,0) {} ;
    \node[shape=circle,draw=black,fill=black, label=$e_1-e_2$] (B) at (1.5,0) {} ;
    \node[shape=circle,draw=black,fill=none,label=$e_4$] (C) at (3,0) {} ;
    \node[shape=circle,draw=black,fill=black, label=$e_3-e_4$] (D) at (4.5,0) {} ;
    \node[shape=circle,draw=black,fill=none, label=$e_5$] (E) at (6,0) {} ;
    \node[shape=circle,draw=black,fill=none, label=$e_9$] (F) at (7.5,0) {} ;
    \node[shape=circle,draw=black,fill=black, label=$e_8-e_9$] (G) at (9,0) {} ;
    \node[shape=circle,draw=black,fill=black, label=$e_7-e_8$] (H) at (10.5,0) {} ;
    \node[shape=circle,draw=black,fill=black, label=$e_6-e_7$] (I) at (12,0) {} ;

    \path [-,draw=black](A) edge node[left] {} (B);
    \path [-,draw=black](C) edge node[left] {} (D);
    \path [-,draw=black](F) edge node[left] {} (G);
    \path [-,draw=black](G) edge node[left] {} (H);
    \path [-,draw=black](H) edge node[left] {} (I);
\end{tikzpicture}
\end{center}

\begin{center}
\begin{tikzpicture}
    \node[shape=circle,draw=black,fill=none, label=$e_9$] (A) at (10.5,0) {} ;
    \node[shape=circle,draw=black,fill=black, label=below:$2\ell-346789$] (B) at (12,0) {} ;
    \node[shape=circle,draw=black,fill=none, label=below:$\ell-e_3-e_6$] (C) at (3,0) {} ;
    \node[shape=circle,draw=black,fill=black, label=$e_3-e_4$] (D) at (4.5,0) {} ;
    \node[shape=circle,draw=black,fill=none, label=$e_2$] (E) at (0,0) {} ;
    \node[shape=circle,draw=black,fill=black, label=$e_1-e_2$] (F) at (1.5,0) {} ;
    \node[shape=circle,draw=black,fill=none, label=$e_5$] (G) at (6,0) {} ;
    \node[shape=circle,draw=black,fill=black, label=below:$\ell-567$] (H) at (7.5,0) {} ;
    \node[shape=circle,draw=black,fill=black, label=$e_7-e_8$] (I) at (9,0) {} ;

    \path [-,draw=black](A) edge node[left] {} (B);
    \path [-,draw=black](C) edge node[left] {} (D);
    \path [-,draw=black](E) edge node[left] {} (F);
    \path [-,draw=black](G) edge node[left] {} (H);
    \path [-,draw=black](H) edge node[left] {} (I);
\end{tikzpicture}
\end{center}

\begin{center}
\begin{tikzpicture}
    \node[shape=circle,draw=black,fill=none, label=$e_2$] (A) at (0,0) {} ;
    \node[shape=circle,draw=black,fill=black, label=below:$2\ell-123467$] (B) at (1.5,0) {} ;
    \node[shape=circle,draw=black,fill=none, label=$\ell-e_3-e_6$] (C) at (3,0) {} ;
    \node[shape=circle,draw=black,fill=black, label=below:$e_6-e_7$] (D) at (4.5,0) {} ;
    \node[shape=circle,draw=black,fill=none, label=$2\ell-13678$] (E) at (6,0) {} ;
    \node[shape=circle,draw=black,fill=black, label=below:$e_8-e_9$] (F) at (7.5,0) {} ;
    \node[shape=circle,draw=black,fill=none, label=$\ell-e_1-e_3$] (G) at (9,0) {} ;
    \node[shape=circle,draw=black,fill=black, label=below:$2\ell-346789$] (H) at (10.5,0) {} ;
    \node[shape=circle,draw=black,fill=none, label=$e_5$] (I) at (12,0) {} ;

    \path [-,draw=black](A) edge node[left] {} (B);
    \path [-,draw=black](C) edge node[left] {} (D);
    \path [-,draw=black](E) edge node[left] {} (F);
    \path [-,draw=black](G) edge node[left] {} (H);
\end{tikzpicture}
\end{center}
\begin{center}
\begin{tikzpicture}
    \node[shape=circle,draw=black,fill=none, label=$e_9$] (A) at (9,0) {} ;
    \node[shape=circle,draw=black,fill=black, label=$e_8-e_9$] (B) at (10.5,0) {} ;
    \node[shape=circle,draw=black,fill=black, label=$e_7-e_8$] (C) at (12,0) {} ;
    \node[shape=circle,draw=black,fill=none, label=$\ell-e_3-e_6$] (D) at (0,0) {} ;
    \node[shape=circle,draw=black,fill=none, label=$\ell-e_1-e_6$] (E) at (3,0) {} ;
    \node[shape=circle,draw=black,fill=none, label=$e_2$] (F) at (1.5,0) {} ;
    \node[shape=circle,draw=black,fill=none, label=$e_4$] (G) at (4.5,0) {} ;
    \node[shape=circle,draw=black,fill=none, label=$\ell-e_1-e_3$] (H) at (7.5,0) {} ;
    \node[shape=circle,draw=black,fill=none, label=$e_5$] (I) at (6,0) {} ;

    \path [-,draw=black](A) edge node[left] {} (B);
    \path [-,draw=black](B) edge node[left] {} (C);
\end{tikzpicture}
\end{center}
The three bottom blow-downs give us the matrices $$\left(\begin{matrix}
3 & 0 & 0 & 1 & 1 & 0 & 2 & 1 & 1 & 0 \\
0 & 1 & 0 & 0 & 0 & 0 & 0 & 0 & 0 & 0 \\
0 & 0 & 1 & 0 & 0 & 0 & 0 & 0 & 0 & 0 \\
-1 & 0 & 0 & 0 & -1 & 0 & -1 & 0 & 0 & 0 \\
-1 & 0 & 0 & -1 & 0 & 0 & -1 & 0 & 0 & 0 \\
-1 & 0 & 0 & 0 & 0 & 0 & -1 & 0 & -1 & 0 \\
-1 & 0 & 0 & 0 & 0 & 0 & -1 & -1 & 0 & 0 \\
0 & 0 & 0 & 0 & 0 & 1 & 0 & 0 & 0 & 0 \\
-2 & 0 & 0 & -1 & -1 & 0 & -1 & -1 & -1 & 0 \\
0 & 0 & 0 & 0 & 0 & 0 & 0 & 0 & 0 & 1
\end{matrix}\right):\Pic X\to\Pic X,$$
$$\left(\begin{matrix}
5 & 2 & 0 & 3 & 1 & 0 & 2 & 2 & 1 & 1 \\
-2 & -1 & 0 & -1 & -1 & 0 & -1 & -1 & 0 & 0 \\
0 & 0 & 1 & 0 & 0 & 0 & 0 & 0 & 0 & 0 \\
-1 & 0 & 0 & -1 & 0 & 0 & 0 & -1 & 0 & 0 \\
-1 & 0 & 0 & -1 & 0 & 0 & -1 & 0 & 0 & 0 \\
-2 & -1 & 0 & -1 & 0 & 0 & -1 & -1 & 0 & -1 \\
-2 & -1 & 0 & -1 & 0 & 0 & -1 & -1 & -1 & 0 \\
-3 & -1 & 0 & -2 & -1 & 0 & -1 & -1 & -1 & -1 \\
-1 & -1 & 0 & -1 & 0 & 0 & 0 & 0 & 0 & 0 \\
0 & 0 & 0 & 0 & 0 & 1 & 0 & 0 & 0 & 0
\end{matrix}\right):\Pic X\to\Pic X,$$
and $$\left(\begin{matrix}
2 & 1 & 0 & 1 & 0 & 0 & 1 & 0 & 0 & 0 \\
-1 & 0 & 0 & -1 & 0 & 0 & -1 & 0 & 0 & 0 \\
0 & 0 & 1 & 0 & 0 & 0 & 0 & 0 & 0 & 0 \\
-1 & -1 & 0 & 0 & 0 & 0 & -1 & 0 & 0 & 0 \\
0 & 0 & 0 & 0 & 1 & 0 & 0 & 0 & 0 & 0 \\
0 & 0 & 0 & 0 & 0 & 1 & 0 & 0 & 0 & 0 \\
-1 & -1 & 0 & -1 & 0 & 0 & 0 & 0 & 0 & 0 \\
0 & 0 & 0 & 0 & 0 & 0 & 0 & 1 & 0 & 0 \\
0 & 0 & 0 & 0 & 0 & 0 & 0 & 0 & 1 & 0 \\
0 & 0 & 0 & 0 & 0 & 0 & 0 & 0 & 0 & 1
\end{matrix}\right):\Pic X\to \Pic X$$ respectively. Applying these change-of-basis matrices to the original Dynkin diagram gives us the following diagrams.
\begin{center}
    \begin{tikzpicture}
    \node[shape=circle,draw=black,fill=black,label=$\ell-345$] (A) at (0,0) {};
    \node[shape=circle,draw=black,fill=black,label=$\ell-589$] (B) at (4,0) {};
    \node[shape=circle,draw=black,fill=black,label=left:$e_5-e_6$] (C) at (2,-2) {};
    \node[shape=circle,draw=black,fill=black,label=below:$\ell-125$] (D) at (0,-4) {};
    \node[shape=circle,draw=black,fill=black,label=below:$e_6-e_7$] (E) at (4,-4) {};
    \node[shape=circle,draw=black,fill=black,label=$-K-e_8+e_9$] (F) at (6,-1) {} ;
    \node[shape=circle,draw=black,fill=black,label=below:$e_8-e_9$] (G) at (11,-1) {} ;
    \node[shape=circle,draw=black,fill=black,label=$\ell-567$] (H) at (6,-2) {} ;
    \node[shape=circle,draw=black,fill=black,label=below:$2\ell-123489$] (I) at (11,-2) {} ;
    \node[shape=circle,draw=black,fill=black,label=$e_3-e_4$] (J) at (6,-3) {} ;
    \node[shape=circle,draw=black,fill=black,label=below:$-K-e_3+e_4$] (K) at (11,-3) {} ;
    \node[shape=circle,draw=black,fill=black,label=$e_1-e_2$] (L) at (6,-4) {} ;
    \node[shape=circle,draw=black,fill=black,label=below:$-K-e_1+e_2$] (M) at (11,-4) {} ;

    \path [-](B) edge node[left] {} (C);
    \path [-](A) edge node[left] {} (C);
    \path [-](D) edge node[left] {} (C);
    \path [-](E) edge node[left] {} (C);
    \path [-,draw=black, bend left=10](F) edge node[left] {} (G);
    \path [-,draw=black, bend right=10](F) edge node[left] {} (G);
    \path [-,draw=black, bend left=10](H) edge node[left] {} (I);
    \path [-,draw=black, bend right=10](H) edge node[left] {} (I);
    \path [-,draw=black, bend left=10](J) edge node[left] {} (K);
    \path [-,draw=black, bend right=10](J) edge node[left] {} (K);
    \path [-,draw=black, bend left=10](L) edge node[left] {} (M);
    \path [-,draw=black, bend right=10](L) edge node[left] {} (M);
\end{tikzpicture}
\noindent\makebox[\linewidth]{\rule{\textwidth}{0.4pt}}
\begin{tikzpicture}
    \node[shape=circle,draw=black,fill=black,label=$e_3-e_4$] (A) at (0,0) {};
    \node[shape=circle,draw=black,fill=black,label=$e_5-e_6$] (B) at (4,0) {};
    \node[shape=circle,draw=black,fill=black,label=left:$\ell-135$] (C) at (2,-2) {};
    \node[shape=circle,draw=black,fill=black,label=below:$e_1-e_2$] (D) at (0,-4) {};
    \node[shape=circle,draw=black,fill=black,label=below:$\ell-789$] (E) at (4,-4) {};
    \node[shape=circle,draw=black,fill=black,label=$-K-e_7+e_8$] (F) at (6,-1) {} ;
    \node[shape=circle,draw=black,fill=black,label=$e_7-e_8$] (G) at (11,-1) {} ;
    \node[shape=circle,draw=black,fill=black,label=$\ell-569$] (H) at (6,-2) {} ;
    \node[shape=circle,draw=black,fill=black,label=$2\ell-123478$] (I) at (11,-2) {} ;
    \node[shape=circle,draw=black,fill=black,label=$2\ell-345678$] (J) at (6,-3) {} ;
    \node[shape=circle,draw=black,fill=black,label=$\ell-129$] (K) at (11,-3) {} ;
    \node[shape=circle,draw=black,fill=black,label=$2\ell-125678$] (L) at (6,-4) {} ;
    \node[shape=circle,draw=black,fill=black,label=$\ell-349$] (M) at (11,-4) {} ;

    \path [-](B) edge node[left] {} (C);
    \path [-](A) edge node[left] {} (C);
    \path [-](D) edge node[left] {} (C);
    \path [-](E) edge node[left] {} (C);
    \path [-,draw=black, bend left=10](F) edge node[left] {} (G);
    \path [-,draw=black, bend right=10](F) edge node[left] {} (G);
    \path [-,draw=black, bend left=10](H) edge node[left] {} (I);
    \path [-,draw=black, bend right=10](H) edge node[left] {} (I);
    \path [-,draw=black, bend left=10](J) edge node[left] {} (K);
    \path [-,draw=black, bend right=10](J) edge node[left] {} (K);
    \path [-,draw=black, bend left=10](L) edge node[left] {} (M);
    \path [-,draw=black, bend right=10](L) edge node[left] {} (M);
\end{tikzpicture}
\noindent\makebox[\linewidth]{\rule{\textwidth}{0.4pt}}
\begin{tikzpicture}
    \node[shape=circle,draw=black,fill=black,label=$\ell-137$] (A) at (0,0) {};
    \node[shape=circle,draw=black,fill=black,label=$e_8-e_9$] (B) at (4,0) {};
    \node[shape=circle,draw=black,fill=black,label=left:$e_7-e_8$] (C) at (2,-2) {};
    \node[shape=circle,draw=black,fill=black,label=below:$\ell-247$] (D) at (0,-4) {};
    \node[shape=circle,draw=black,fill=black,label=below:$\ell-567$] (E) at (4,-4) {};
    \node[shape=circle,draw=black,fill=black,label=$\ell-125$] (F) at (6,-1) {} ;
    \node[shape=circle,draw=black,fill=black,label=below:$2\ell-346789$] (G) at (11,-1) {} ;
    \node[shape=circle,draw=black,fill=black,label=$\ell-345$] (H) at (6,-2) {} ;
    \node[shape=circle,draw=black,fill=black,label=below:$2\ell-126789$] (I) at (11,-2) {} ;
    \node[shape=circle,draw=black,fill=black,label=$\ell-146$] (J) at (6,-3) {} ;
    \node[shape=circle,draw=black,fill=black,label=below:$2\ell-235789$] (K) at (11,-3) {} ;
    \node[shape=circle,draw=black,fill=black,label=$\ell-236$] (L) at (6,-4) {} ;
    \node[shape=circle,draw=black,fill=black,label=below:$2\ell-145789$] (M) at (11,-4) {} ;

    \path [-](B) edge node[left] {} (C);
    \path [-](A) edge node[left] {} (C);
    \path [-](D) edge node[left] {} (C);
    \path [-](E) edge node[left] {} (C);
    \path [-,draw=black, bend left=10](F) edge node[left] {} (G);
    \path [-,draw=black, bend right=10](F) edge node[left] {} (G);
    \path [-,draw=black, bend left=10](H) edge node[left] {} (I);
    \path [-,draw=black, bend right=10](H) edge node[left] {} (I);
    \path [-,draw=black, bend left=10](J) edge node[left] {} (K);
    \path [-,draw=black, bend right=10](J) edge node[left] {} (K);
    \path [-,draw=black, bend left=10](L) edge node[left] {} (M);
    \path [-,draw=black, bend right=10](L) edge node[left] {} (M);
\end{tikzpicture}
\end{center}
\begin{itemize}
\item The original diagram is a pencil defined by the union of a conic $C_1$ and a tangent line $L_1$, and the union of a conic $C_2$ that intersects $C_1$ at two points with multiplicity 2 and a line $L_2$ that intersects $C_1$ at one point with muliplicity 2. An example of such a pencil is $x^2z+xy^2$ and $x^2z+\vhi xz^2+\vhi^2y^2z$ where $\vhi$ satisfies $\vhi^2+\vhi+1=0$.

\item The second diagram is a pencil of the union of a conic and a line with the union of three concurrent lines, each tangent to the conic. An example of such a pencil is that spanned by $(y^2+xz)(x+\vhi z)$ and $xz(x+z)$.

\item The third diagram is a pencil spanned by the union of a conic and a line, and the union of a line that is tangent to the conic and a double line. An example of such a pencil is that spanned by $(y^2+xz)(x+\vhi z)$ and $y^2(x+z)$.

\item The fourth diagram corresponds to a pencil spanned by the union of three concurrent lines, and the union of a line and a conic that contains that contains the point where the three lines intersect. The seven ordinary points in the intersection of these two cubics form a Fano plane. An example of such a pencil is that spanned by $(x+y)(x+z)(y+z)$ and $(x+y+z)(Ax(y+z)+By(x+z))$ where $(A,B)\in\P^1$.
\end{itemize}
\begin{center}
    \textbf{Case 3:} $\wilde{A}_1^{\+2}\+\wilde{D}_6$
\end{center}
Let's now look at the Dynkin diagram $\wilde{A}_1^{\+2}\+\wilde{D}_6$, as depicted below.
\begin{center}
\begin{adjustwidth*}{}{-0em}
\begin{tikzpicture}
    \node[shape=circle,draw=black,fill=black, label=$e_6-e_7$] (A) at (0,0) {} ;
    \node[shape=circle,draw=black,fill=black, label=$\ell-345$] (B) at (0,-2) {} ;
    \node[shape=circle,draw=black,fill=black, label=$e_5-e_6$] (C) at (3,-1) {} ;
    \node[shape=circle,draw=black,fill=black, label=$e_4-e_5$] (D) at (6,-1) {} ;
    \node[shape=circle,draw=black,fill=black, label=$\ell-148$] (E) at (9,-1) {} ;
    \node[shape=circle,draw=black,fill=black, label=$e_8-e_9$] (F) at (12,0) {} ;
    \node[shape=circle,draw=black,fill=black, label=$e_1-e_2$] (G) at (12,-2) {} ;
    \node[shape=circle,draw=black,fill=black,label=$2\ell-456789$] (H) at (0,2) {} ;
    \node[shape=circle,draw=black,fill=black, label=below:$\ell-123$] (I) at (3,2) {} ;
    \node[shape=circle,draw=black,fill=black, label=below:$2\ell-124567$] (J) at (9,2) {} ;
    \node[shape=circle,draw=black,fill=black, label=$\ell-389$] (K) at (12,2) {} ;

    \path [-,draw=black](A) edge node[left] {} (C);
    \path [-,draw=black](B) edge node[left] {} (C);
    \path [-,draw=black](C) edge node[left] {} (D);
    \path [-,draw=black](D) edge node[left] {} (E);
    \path [-,draw=black](E) edge node[left] {} (F);
    \path [-,draw=black](E) edge node[left] {} (G);
    \path [-,draw=black, bend left=10](H) edge node[left] {} (I);
    \path [-,draw=black, bend right=10](H) edge node[left] {} (I);
    \path [-,draw=black, bend left=10](J) edge node[left] {} (K);
    \path [-,draw=black, bend right=10](J) edge node[left] {} (K);
\end{tikzpicture}
\end{adjustwidth*}
\end{center}
Using Lemma \ref{MW}, we know there are four $(-1)$-curves; they are $e_2$, $e_3$, $e_7$, and $e_9$. 

A similar analysis as in Proposition \ref{blowdowns} reveals that there are-- up to isomorphism of the graph-- four ways to blow down the surface to $\P^2$. The details of this analysis have been omitted for the sake of brevity. The four ways to blow down this diagram (up to isomorphism) are given by the following diagrams.
\begin{center}
\begin{tikzpicture}
    \node[shape=circle,draw=black,fill=none, label=$e_2$] (A) at (0,0) {} ;
    \node[shape=circle,draw=black,fill=black, label=$e_1-e_2$] (B) at (1.5,0) {} ;
    \node[shape=circle,draw=black,fill=none,label=$e_3$] (C) at (3,0) {} ;
    \node[shape=circle,draw=black,fill=none, label=$e_7$] (D) at (4.5,0) {} ;
    \node[shape=circle,draw=black,fill=black, label=$e_6-e_7$] (E) at (6,0) {} ;
    \node[shape=circle,draw=black,fill=black, label=$e_5-e_6$] (F) at (7.5,0) {} ;
    \node[shape=circle,draw=black,fill=black, label=$e_4-e_5$] (G) at (9,0) {} ;
    \node[shape=circle,draw=black,fill=none, label=$e_9$] (H) at (10.5,0) {} ;
    \node[shape=circle,draw=black,fill=black, label=$e_8-e_9$] (I) at (12,0) {} ;

    \path [-,draw=black](A) edge node[left] {} (B);
    \path [-,draw=black](D) edge node[left] {} (E);
    \path [-,draw=black](E) edge node[left] {} (F);
    \path [-,draw=black](F) edge node[left] {} (G);
    \path [-,draw=black](H) edge node[left] {} (I);
\end{tikzpicture}
\end{center}

\begin{center}
\begin{tikzpicture}
    \node[shape=circle,draw=black,fill=none, label=$e_2$] (A) at (0,0) {} ;
    \node[shape=circle,draw=black,fill=black, label=below:$e_1-e_2$] (B) at (1.5,0) {} ;
    \node[shape=circle,draw=black,fill=black, label=$\ell-148$] (C) at (3,0) {} ;
    \node[shape=circle,draw=black,fill=black, label=below:$e_8-e_9$] (D) at (4.5,0) {} ;
    \node[shape=circle,draw=black,fill=none, label=$e_3$] (E) at (6,0) {} ;
    \node[shape=circle,draw=black,fill=black, label=below:$\ell-345$] (F) at (7.5,0) {} ;
    \node[shape=circle,draw=black,fill=black, label=$e_5-e_6$] (G) at (9,0) {} ;
    \node[shape=circle,draw=black,fill=none, label=below:$e_7$] (H) at (10.5,0) {} ;
    \node[shape=circle,draw=black,fill=black, label=$2\ell-456789$] (I) at (12,0) {} ;

    \path [-,draw=black](A) edge node[left] {} (B);
    \path [-,draw=black](B) edge node[left] {} (C);
    \path [-,draw=black](C) edge node[left] {} (D);
    \path [-,draw=black](E) edge node[left] {} (F);
    \path [-,draw=black](F) edge node[left] {} (G);
    \path [-,draw=black](H) edge node[left] {} (I);
\end{tikzpicture}
\end{center}

\begin{center}
\begin{tikzpicture}
    \node[shape=circle,draw=black,fill=none, label=$e_2$] (A) at (0,0) {} ;
    \node[shape=circle,draw=black,fill=black, label=below:$\ell-123$] (B) at (1.5,0) {} ;
    \node[shape=circle,draw=black,fill=none, label=$e_7$] (C) at (3,0) {} ;
    \node[shape=circle,draw=black,fill=black, label=below:$e_6-e_7$] (D) at (4.5,0) {} ;
    \node[shape=circle,draw=black,fill=black, label=$e_5-e_6$] (E) at (6,0) {} ;
    \node[shape=circle,draw=black,fill=black, label=below:$e_4-e_5$] (F) at (7.5,0) {} ;
    \node[shape=circle,draw=black,fill=black, label=$\ell-148$] (G) at (9,0) {} ;
    \node[shape=circle,draw=black,fill=none, label=below:$e_9$] (H) at (10.5,0) {} ;
    \node[shape=circle,draw=black,fill=black, label=$\ell-389$] (I) at (12,0) {} ;

    \path [-,draw=black](A) edge node[left] {} (B);
    \path [-,draw=black](C) edge node[left] {} (D);
    \path [-,draw=black](D) edge node[left] {} (E);
    \path [-,draw=black](E) edge node[left] {} (F);
    \path [-,draw=black](F) edge node[left] {} (G);
    \path [-,draw=black](H) edge node[left] {} (I);
\end{tikzpicture}
\end{center}
\begin{center}
\begin{tikzpicture}
    \node[shape=circle,draw=black,fill=none, label=$e_3$] (A) at (0,0) {} ;
    \node[shape=circle,draw=black,fill=black, label=below:$\ell-123$] (B) at (1.5,0) {} ;
    \node[shape=circle,draw=black,fill=none, label=$e_7$] (C) at (3,0) {} ;
    \node[shape=circle,draw=black,fill=black, label=below:$e_6-e_7$] (D) at (4.5,0) {} ;
    \node[shape=circle,draw=black,fill=black, label=$e_5-e_6$] (E) at (6,0) {} ;
    \node[shape=circle,draw=black,fill=black, label=$e_4-e_5$] (F) at (7.5,0) {} ;
    \node[shape=circle,draw=black,fill=black, label=below:$\ell-148$] (G) at (9,0) {} ;
    \node[shape=circle,draw=black,fill=black, label=$e_1-e_2$] (H) at (10.5,0) {} ;
    \node[shape=circle,draw=black,fill=none, label=$e_9$] (I) at (12,0) {} ;

    \path [-,draw=black](A) edge node[left] {} (B);
    \path [-,draw=black](C) edge node[left] {} (D);
    \path [-,draw=black](D) edge node[left] {} (E);
    \path [-,draw=black](E) edge node[left] {} (F);
    \path [-,draw=black](F) edge node[left] {} (G);
    \path [-,draw=black](G) edge node[left] {} (H);
\end{tikzpicture}
\end{center}
The bottom three blow-downs give us the change-of-basis matrices $$\left(\begin{matrix}
3 & 0 & 0 & 0 & 2 & 1 & 1 & 0 & 1 & 1 \\
-1 & 0 & 0 & 0 & -1 & 0 & 0 & 0 & 0 & -1 \\
-1 & 0 & 0 & 0 & -1 & 0 & 0 & 0 & -1 & 0 \\
0 & 1 & 0 & 0 & 0 & 0 & 0 & 0 & 0 & 0 \\
0 & 0 & 1 & 0 & 0 & 0 & 0 & 0 & 0 & 0 \\
-1 & 0 & 0 & 0 & -1 & 0 & -1 & 0 & 0 & 0 \\
-1 & 0 & 0 & 0 & -1 & -1 & 0 & 0 & 0 & 0 \\
0 & 0 & 0 & 1 & 0 & 0 & 0 & 0 & 0 & 0 \\
-2 & 0 & 0 & 0 & -1 & -1 & -1 & 0 & -1 & -1 \\
0 & 0 & 0 & 0 & 0 & 0 & 0 & 1 & 0 & 0
\end{matrix}\right):\Pic X\to\Pic X,$$ $$\left(\begin{matrix}
2 & 1 & 0 & 1 & 0 & 0 & 0 & 0 & 1 & 0 \\
-1 & -1 & 0 & -1 & 0 & 0 & 0 & 0 & 0 & 0 \\
0 & 0 & 1 & 0 & 0 & 0 & 0 & 0 & 0 & 0 \\
-1 & -1 & 0 & 0 & 0 & 0 & 0 & 0 & -1 & 0 \\
0 & 0 & 0 & 0 & 1 & 0 & 0 & 0 & 0 & 0 \\
0 & 0 & 0 & 0 & 0 & 1 & 0 & 0 & 0 & 0 \\
0 & 0 & 0 & 0 & 0 & 0 & 1 & 0 & 0 & 0 \\
0 & 0 & 0 & 0 & 0 & 0 & 0 & 1 & 0 & 0 \\
-1 & 0 & 0 & -1 & 0 & 0 & 0 & 0 & -1 & 0 \\
0 & 0 & 0 & 0 & 0 & 0 & 0 & 0 & 0 & 1
\end{matrix}\right):\Pic X\to\Pic X,$$ and $$\left(\begin{matrix}
2 & 1 & 1 & 0 & 0 & 0 & 0 & 0 & 1 & 0 \\
-1 & -1 & -1 & 0 & 0 & 0 & 0 & 0 & 0 & 0 \\
0 & 0 & 0 & 1 & 0 & 0 & 0 & 0 & 0 & 0 \\
-1 & 0 & -1 & 0 & 0 & 0 & 0 & 0 & -1 & 0 \\
-1 & -1 & 0 & 0 & 0 & 0 & 0 & 0 & -1 & 0 \\
0 & 0 & 0 & 0 & 1 & 0 & 0 & 0 & 0 & 0 \\
0 & 0 & 0 & 0 & 0 & 1 & 0 & 0 & 0 & 0 \\
0 & 0 & 0 & 0 & 0 & 0 & 1 & 0 & 0 & 0 \\
0 & 0 & 0 & 0 & 0 & 0 & 0 & 1 & 0 & 0 \\
0 & 0 & 0 & 0 & 0 & 0 & 0 & 0 & 0 & 1
\end{matrix}\right):\Pic X\to\Pic X$$ respectively. Applying these change-of-basis matrices to the original Dynkin diagram gives us the following diagrams.
\begin{center}
\begin{tikzpicture}
    \node[shape=circle,draw=black,fill=black, label=$\ell-589$] (A) at (0,0) {} ;
    \node[shape=circle,draw=black,fill=black, label=$e_6-e_7$] (B) at (0,-2) {} ;
    \node[shape=circle,draw=black,fill=black, label=$e_5-e_6$] (C) at (3,-1) {} ;
    \node[shape=circle,draw=black,fill=black, label=$\ell-125$] (D) at (6,-1) {} ;
    \node[shape=circle,draw=black,fill=black, label=$e_2-e_3$] (E) at (9,-1) {} ;
    \node[shape=circle,draw=black,fill=black, label=$e_1-e_2$] (F) at (12,0) {} ;
    \node[shape=circle,draw=black,fill=black, label=$e_3-e_4$] (G) at (12,-2) {} ;
    \node[shape=circle,draw=black,fill=black,label=below:$e_8-e_9$] (H) at (0,2) {} ;
    \node[shape=circle,draw=black,fill=black, label=$-K-e_8+e_9$] (I) at (3,2) {} ;
    \node[shape=circle,draw=black,fill=black, label=below:$2\ell-123489$] (J) at (9,2) {} ;
    \node[shape=circle,draw=black,fill=black, label=$\ell-567$] (K) at (12,2) {} ;

    \path [-,draw=black](A) edge node[left] {} (C);
    \path [-,draw=black](B) edge node[left] {} (C);
    \path [-,draw=black](C) edge node[left] {} (D);
    \path [-,draw=black](D) edge node[left] {} (E);
    \path [-,draw=black](E) edge node[left] {} (F);
    \path [-,draw=black](E) edge node[left] {} (G);
    \path [-,draw=black, bend left=10](H) edge node[left] {} (I);
    \path [-,draw=black, bend right=10](H) edge node[left] {} (I);
    \path [-,draw=black, bend left=10](J) edge node[left] {} (K);
    \path [-,draw=black, bend right=10](J) edge node[left] {} (K);
\end{tikzpicture}
\end{center}
\noindent\makebox[\linewidth]{\rule{\textwidth}{0.4pt}}
\begin{center}
\begin{tikzpicture}
    \node[shape=circle,draw=black,fill=black, label=$e_6-e_7$] (A) at (0,0) {} ;
    \node[shape=circle,draw=black,fill=black, label=$\ell-345$] (B) at (0,-2) {} ;
    \node[shape=circle,draw=black,fill=black, label=$e_5-e_6$] (C) at (3,-1) {} ;
    \node[shape=circle,draw=black,fill=black, label=$e_4-e_5$] (D) at (6,-1) {} ;
    \node[shape=circle,draw=black,fill=black, label=$e_3-e_4$] (E) at (9,-1) {} ;
    \node[shape=circle,draw=black,fill=black, label=$\ell-389$] (F) at (12,0) {} ;
    \node[shape=circle,draw=black,fill=black, label=$\ell-123$] (G) at (12,-2) {} ;
    \node[shape=circle,draw=black,fill=black,label=$-K-e_1+e_2$] (H) at (0,2) {} ;
    \node[shape=circle,draw=black,fill=black, label=below:$e_1-e_2$] (I) at (3,2) {} ;
    \node[shape=circle,draw=black,fill=black, label=below:$-K-e_8+e_9$] (J) at (9,2) {} ;
    \node[shape=circle,draw=black,fill=black, label=$e_8-e_9$] (K) at (12,2) {} ;

    \path [-,draw=black](A) edge node[left] {} (C);
    \path [-,draw=black](B) edge node[left] {} (C);
    \path [-,draw=black](C) edge node[left] {} (D);
    \path [-,draw=black](D) edge node[left] {} (E);
    \path [-,draw=black](E) edge node[left] {} (F);
    \path [-,draw=black](E) edge node[left] {} (G);
    \path [-,draw=black, bend left=10](H) edge node[left] {} (I);
    \path [-,draw=black, bend right=10](H) edge node[left] {} (I);
    \path [-,draw=black, bend left=10](J) edge node[left] {} (K);
    \path [-,draw=black, bend right=10](J) edge node[left] {} (K);
\end{tikzpicture}
\end{center}
\noindent\makebox[\linewidth]{\rule{\textwidth}{0.4pt}}
\begin{center}

\begin{adjustwidth*}{}{-0em}
\begin{tikzpicture}
    \node[shape=circle,draw=black,fill=black, label=$e_7-e_8$] (A) at (0,0) {} ;
    \node[shape=circle,draw=black,fill=black, label=$2\ell-123456$] (B) at (0,-2) {} ;
    \node[shape=circle,draw=black,fill=black, label=$e_6-e_7$] (C) at (3,-1) {} ;
    \node[shape=circle,draw=black,fill=black, label=$e_5-e_6$] (D) at (6,-1) {} ;
    \node[shape=circle,draw=black,fill=black, label=$e_4-e_5$] (E) at (9,-1) {} ;
    \node[shape=circle,draw=black,fill=black, label=$\ell-349$] (F) at (12,0) {} ;
    \node[shape=circle,draw=black,fill=black, label=$e_3-e_4$] (G) at (12,-2) {} ;
    \node[shape=circle,draw=black,fill=black,label=$-K-e_1+e_2$] (H) at (0,2) {} ;
    \node[shape=circle,draw=black,fill=black, label=below:$e_1-e_2$] (I) at (3,2) {} ;
    \node[shape=circle,draw=black,fill=black, label=below:$2\ell-345678$] (J) at (9,2) {} ;
    \node[shape=circle,draw=black,fill=black, label=$\ell-129$] (K) at (12,2) {} ;

    \path [-,draw=black](A) edge node[left] {} (C);
    \path [-,draw=black](B) edge node[left] {} (C);
    \path [-,draw=black](C) edge node[left] {} (D);
    \path [-,draw=black](D) edge node[left] {} (E);
    \path [-,draw=black](E) edge node[left] {} (F);
    \path [-,draw=black](E) edge node[left] {} (G);
    \path [-,draw=black, bend left=10](H) edge node[left] {} (I);
    \path [-,draw=black, bend right=10](H) edge node[left] {} (I);
    \path [-,draw=black, bend left=10](J) edge node[left] {} (K);
    \path [-,draw=black, bend right=10](J) edge node[left] {} (K);
\end{tikzpicture}
\end{adjustwidth*}
\end{center}
\begin{itemize}
\item The original diagram corresponds to a pencil spanned by the union of a conic and a line, and the union of a line tangent to the conic and a double line. An example of such a pencil is that spanned by $(y^2+xz)(x+z)$ and $xy^2$.

\item The second diagram corresponds to a pencil spanned by the union of a conic and a line, and the union of two lines tangent to the conic and intersect on the first line, one of which is double. An example of such a pencil is $(y^2+xz)(x+z)$ and $x^2z$.

\item The third diagram corresponds to a pencil spanned by the union of three concurrent lines, and a cuspidal cubic that has a cusp on one of the lines, is tangent to another line at the concurrent intersection point, and is tangent to the third line elsewhere. An example of such a pencil is that spanned by $x^3+y^2z$ and $xz(x+z)$.

\item The fourth diagram corresponds to a pencil spanned by the union of a conic and a tangent line, and the union of a line that is tangent to the first conic and a conic that intersects the first conic with multiplicity 4 at the original tangent point, creating a multiplicity-6 intersection of the two cubics at that point. An example of such a pencil is that spanned by $(y^2+xz)x$ and $z(y^2+x^2+xz)$. The modulus is 1 because there is a pencil of conics that meet each other at a point of multiplicity 4.
\end{itemize}
\begin{center}
    \textbf{Case 4:} $\wilde{D}_4^{\+2}$
\end{center}
Let's now look at the Dynkin diagram $\wilde{D}_4^{\+2}$, as depicted below.
\begin{center}
\begin{tikzpicture}
    \node[shape=circle,draw=black,fill=black,label=$e_6-e_7$] (A) at (0,0) {};
    \node[shape=circle,draw=black,fill=black,label=$e_8-e_9$] (B) at (4,0) {};
    \node[shape=circle,draw=black,fill=black,label=left:$\ell-468$] (C) at (2,-2) {};
    \node[shape=circle,draw=black,fill=black,label=below:$e_4-e_5$] (D) at (0,-4) {};
    \node[shape=circle,draw=black,fill=black,label=below:$\ell-123$] (E) at (4,-4) {};
    
    \node[shape=circle,draw=black,fill=black,label=$\ell-189$] (F) at (6,0) {};
    \node[shape=circle,draw=black,fill=black,label=$\ell-167$] (G) at (10,0) {};
    \node[shape=circle,draw=black,fill=black,label=left:$e_1-e_2$] (H) at (8,-2) {};
    \node[shape=circle,draw=black,fill=black,label=below:$\ell-145$] (I) at (6,-4) {};
    \node[shape=circle,draw=black,fill=black,label=below:$e_2-e_3$] (J) at (10,-4) {};

    \path [-](B) edge node[left] {} (C);
    \path [-](A) edge node[left] {} (C);
    \path [-](D) edge node[left] {} (C);
    \path [-](E) edge node[left] {} (C);
    
    \path [-](F) edge node[left] {} (H);
    \path [-](G) edge node[left] {} (H);
    \path [-](I) edge node[left] {} (H);
    \path [-](J) edge node[left] {} (H);
\end{tikzpicture}
\end{center}
Using Lemma \ref{MW}, we know there are four $(-1)$-curves of this diagram; they are $e_3$, $e_5$, $e_7$, and $e_9$. 

A similar analysis as in Proposition \ref{blowdowns} reveals that there are-- up to isomorphism of the graph-- two ways to blow down the surface to $\P^2$. The details of this analysis have been omitted for the sake of brevity. The two blow-downs of the diagram (up to isomorphism) are represented by the diagrams below.
\begin{center}
\begin{tikzpicture}
    \node[shape=circle,draw=black,fill=none, label=$e_3$] (A) at (0,0) {} ;
    \node[shape=circle,draw=black,fill=black, label=$e_2-e_3$] (B) at (1.5,0) {} ;
    \node[shape=circle,draw=black,fill=black,label=$e_1-e_2$] (C) at (3,0) {} ;
    \node[shape=circle,draw=black,fill=none, label=$e_5$] (D) at (4.5,0) {} ;
    \node[shape=circle,draw=black,fill=black, label=$e_4-e_5$] (E) at (6,0) {} ;
    \node[shape=circle,draw=black,fill=none, label=$e_7$] (F) at (7.5,0) {} ;
    \node[shape=circle,draw=black,fill=black, label=$e_6-e_7$] (G) at (9,0) {} ;
    \node[shape=circle,draw=black,fill=none, label=$e_9$] (H) at (10.5,0) {} ;
    \node[shape=circle,draw=black,fill=black, label=$e_8-e_9$] (I) at (12,0) {} ;

    \path [-,draw=black](A) edge node[left] {} (B);
    \path [-,draw=black](B) edge node[left] {} (C);
    \path [-,draw=black](D) edge node[left] {} (E);
    \path [-,draw=black](F) edge node[left] {} (G);
    \path [-,draw=black](H) edge node[left] {} (I);
\end{tikzpicture}
\end{center}

\begin{center}
\begin{tikzpicture}
    \node[shape=circle,draw=black,fill=none, label=$e_3$] (A) at (0,0) {} ;
    \node[shape=circle,draw=black,fill=black, label=below:$e_2-e_3$] (B) at (1.5,0) {} ;
    \node[shape=circle,draw=black,fill=black, label=$e_1-e_2$] (C) at (3,0) {} ;
    \node[shape=circle,draw=black,fill=black, label=below:$\ell-145$] (D) at (4.5,0) {} ;
    \node[shape=circle,draw=black,fill=none, label=$e_7$] (E) at (6,0) {} ;
    \node[shape=circle,draw=black,fill=black, label=below:$e_6-e_7$] (F) at (7.5,0) {} ;
    \node[shape=circle,draw=black,fill=black, label=$\ell-468$] (G) at (9,0) {} ;
    \node[shape=circle,draw=black,fill=black, label=below:$e_4-e_5$] (H) at (10.5,0) {} ;
    \node[shape=circle,draw=black,fill=none, label=$e_9$] (I) at (12,0) {} ;

    \path [-,draw=black](A) edge node[left] {} (B);
    \path [-,draw=black](B) edge node[left] {} (C);
    \path [-,draw=black](C) edge node[left] {} (D);
    \path [-,draw=black](E) edge node[left] {} (F);
    \path [-,draw=black](F) edge node[left] {} (G);
    \path [-,draw=black](G) edge node[left] {} (H);
\end{tikzpicture}
\end{center}
The second blow-down gives us the following change-of-basis matrix. $$\left(\begin{matrix}
2 & 0 & 0 & 0 & 1 & 1 & 0 & 0 & 1 & 0 \\
-1 & 0 & 0 & 0 & -1 & -1 & 0 & 0 & 0 & 0 \\
0 & 1 & 0 & 0 & 0 & 0 & 0 & 0 & 0 & 0 \\
0 & 0 & 1 & 0 & 0 & 0 & 0 & 0 & 0 & 0 \\
0 & 0 & 0 & 1 & 0 & 0 & 0 & 0 & 0 & 0 \\
-1 & 0 & 0 & 0 & 0 & -1 & 0 & 0 & -1 & 0 \\
-1 & 0 & 0 & 0 & -1 & 0 & 0 & 0 & -1 & 0 \\
0 & 0 & 0 & 0 & 0 & 0 & 1 & 0 & 0 & 0 \\
0 & 0 & 0 & 0 & 0 & 0 & 0 & 1 & 0 & 0 \\
0 & 0 & 0 & 0 & 0 & 0 & 0 & 0 & 0 & 1
\end{matrix}\right):\Pic X\to\Pic X.$$
Applying this matrix to the Dynkin diagram gives us the following relabeling.
\begin{center}
\begin{tikzpicture}
    \node[shape=circle,draw=black,fill=black,label=$e_7-e_8$] (A) at (0,0) {};
    \node[shape=circle,draw=black,fill=black,label=$\ell-569$] (B) at (4,0) {};
    \node[shape=circle,draw=black,fill=black,label=left:$e_6-e_7$] (C) at (2,-2) {};
    \node[shape=circle,draw=black,fill=black,label=below:$e_5-e_6$] (D) at (0,-4) {};
    \node[shape=circle,draw=black,fill=black,label=below:$2\ell-123456$] (E) at (4,-4) {};
    
    \node[shape=circle,draw=black,fill=black,label=$\ell-129$] (F) at (6,0) {};
    \node[shape=circle,draw=black,fill=black,label=$2\ell-125678$] (G) at (10,0) {};
    \node[shape=circle,draw=black,fill=black,label=left:$e_2-e_3$] (H) at (8,-2) {};
    \node[shape=circle,draw=black,fill=black,label=below:$e_1-e_2$] (I) at (6,-4) {};
    \node[shape=circle,draw=black,fill=black,label=below:$e_3-e_4$] (J) at (10,-4) {};

    \path [-](B) edge node[left] {} (C);
    \path [-](A) edge node[left] {} (C);
    \path [-](D) edge node[left] {} (C);
    \path [-](E) edge node[left] {} (C);
    
    \path [-](F) edge node[left] {} (H);
    \path [-](G) edge node[left] {} (H);
    \path [-](I) edge node[left] {} (H);
    \path [-](J) edge node[left] {} (H);
\end{tikzpicture}
\end{center}
\begin{itemize}
\item The original diagram corresponds to a pencil spanned by the union of a line and a double line, and the union of three concurrent lines which all intersect on the single line. An example of such a pencil is that spanned by $x(x+z)z$ and $y^2(x+\vhi z)$.

\item The second diagram corresponds to a pencil spanned by the union of a conic $C_1$ and a tangent line $L_1$, and the union of a conic $C_2$ which intersects $C_1$ at two points with multiplicity 2, one of which is the intersection point of $C_1$ with $L_1$, and a line $L_2$ which is tangent to $C_1$ and $C_2$ at the other point where they meet with multiplicity 2. An example of such a pencil is that spanned by $x(y^2+xz)$ and $z(\vhi y^2+xz)$.
\end{itemize}
\begin{center}
    \textbf{Case 5:} $\wilde{A}_1\+\wilde{E}_7$
\end{center}
Let's now look at the Dynkin diagram $\wilde{A}_1\+\wilde{E}_7$, as depicted below.
\begin{center}
\begin{tikzpicture}
    \node[shape=circle,draw=black,fill=black, label=$e_8-e_9$] (A) at (0,0) {} ;
    \node[shape=circle,draw=black,fill=black, label=below:$e_7-e_8$] (B) at (1.5,0) {} ;
    \node[shape=circle,draw=black,fill=black, label=$e_6-e_7$] (C) at (3,0) {} ;
    \node[shape=circle,draw=black,fill=black, label=below:$e_5-e_6$] (D) at (4.5,0) {} ;
    \node[shape=circle,draw=black,fill=black, label=$e_4-e_5$] (E) at (6,0) {} ;
    \node[shape=circle,draw=black,fill=black, label=below:$e_3-e_4$] (F) at (7.5,0) {} ;
    \node[shape=circle,draw=black,fill=black, label=$\ell-123$] (G) at (9,0) {} ;
    \node[shape=circle,draw=black,fill=black, label=$\ell-345$] (H) at (4.5,1) {} ;
    \node[shape=circle,draw=black,fill=black, label=$-K-e_1+e_2$] (I) at (3,3) {} ;
    \node[shape=circle,draw=black,fill=black, label=right:$e_1-e_2$] (J) at (6,3) {} ;

    \path [-,draw=black](A) edge node[left] {} (B);
    \path [-,draw=black](B) edge node[left] {} (C);
    \path [-,draw=black](C) edge node[left] {} (D);
    \path [-,draw=black](E) edge node[left] {} (F);
    \path [-,draw=black](F) edge node[left] {} (G);
    \path [-,draw=black](D) edge node[left] {} (E);
    \path [-,draw=black](D) edge node[left] {} (H);
    \path [-,draw=black, bend left=10](I) edge node[left] {} (J);
    \path [-,draw=black, bend right=10](I) edge node[left] {} (J);
\end{tikzpicture}
\end{center}
Using Lemma \ref{MW}, we know there are two $(-1)$-curves of this diagram; they are $e_2$ and $e_9$. 

A similar analysis as in Proposition \ref{blowdowns} reveals that there are-- up to isomorphism of the graph-- four ways to blow down the surface to $\P^2$. The details of this analysis have been omitted for the sake of brevity. The two blow-downs of the diagram (up to isomorphism) are represented by the diagrams below.
\begin{center}
\begin{tikzpicture}
    \node[shape=circle,draw=black,fill=none, label=$e_2$] (A) at (0,0) {} ;
    \node[shape=circle,draw=black,fill=black, label=$e_1-e_2$] (B) at (1.5,0) {} ;
    \node[shape=circle,draw=black,fill=none,label=$e_9$] (C) at (3,0) {} ;
    \node[shape=circle,draw=black,fill=black, label=$e_8-e_9$] (D) at (4.5,0) {} ;
    \node[shape=circle,draw=black,fill=black, label=$e_7-e_8$] (E) at (6,0) {} ;
    \node[shape=circle,draw=black,fill=black, label=$e_6-e_7$] (F) at (7.5,0) {} ;
    \node[shape=circle,draw=black,fill=black, label=$e_5-e_6$] (G) at (9,0) {} ;
    \node[shape=circle,draw=black,fill=black, label=$e_4-e_5$] (H) at (10.5,0) {} ;
    \node[shape=circle,draw=black,fill=black, label=$e_3-e_4$] (I) at (12,0) {} ;

    \path [-,draw=black](A) edge node[left] {} (B);
    \path [-,draw=black](D) edge node[left] {} (C);
    \path [-,draw=black](D) edge node[left] {} (E);
    \path [-,draw=black](F) edge node[left] {} (G);
    \path [-,draw=black](H) edge node[left] {} (I);
    \path [-,draw=black](E) edge node[left] {} (F);
    \path [-,draw=black](G) edge node[left] {} (H);
\end{tikzpicture}
\end{center}

\begin{center}
\begin{tikzpicture}
    \node[shape=circle,draw=black,fill=none, label=$e_2$] (A) at (0,0) {} ;
    \node[shape=circle,draw=black,fill=black, label=below:$\ell-123$] (B) at (1.5,0) {} ;
    \node[shape=circle,draw=black,fill=black, label=$e_3-e_4$] (C) at (3,0) {} ;
    \node[shape=circle,draw=black,fill=none, label=below:$e_9$] (D) at (4.5,0) {} ;
    \node[shape=circle,draw=black,fill=black, label=$e_8-e_9$] (E) at (6,0) {} ;
    \node[shape=circle,draw=black,fill=black, label=below:$e_7-e_8$] (F) at (7.5,0) {} ;
    \node[shape=circle,draw=black,fill=black, label=$e_6-e_7$] (G) at (9,0) {} ;
    \node[shape=circle,draw=black,fill=black, label=below:$e_5-e_6$] (H) at (10.5,0) {} ;
    \node[shape=circle,draw=black,fill=black, label=$\ell-345$] (I) at (12,0) {} ;

    \path [-,draw=black](A) edge node[left] {} (B);
    \path [-,draw=black](B) edge node[left] {} (C);
    \path [-,draw=black](H) edge node[left] {} (I);
    \path [-,draw=black](E) edge node[left] {} (F);
    \path [-,draw=black](F) edge node[left] {} (G);
    \path [-,draw=black](G) edge node[left] {} (H);
    \path [-,draw=black](D) edge node[left] {} (E);
\end{tikzpicture}
\end{center}
The second blow-down gives us the following change-of-basis matrix. $$\left(\begin{matrix}
2 & 1 & 0 & 1 & 1 & 0 & 0 & 0 & 0 & 0 \\
-1 & -1 & 0 & 0 & -1 & 0 & 0 & 0 & 0 & 0 \\
-1 & -1 & 0 & -1 & 0 & 0 & 0 & 0 & 0 & 0 \\
0 & 0 & 1 & 0 & 0 & 0 & 0 & 0 & 0 & 0 \\
-1 & 0 & 0 & -1 & -1 & 0 & 0 & 0 & 0 & 0 \\
0 & 0 & 0 & 0 & 0 & 1 & 0 & 0 & 0 & 0 \\
0 & 0 & 0 & 0 & 0 & 0 & 1 & 0 & 0 & 0 \\
0 & 0 & 0 & 0 & 0 & 0 & 0 & 1 & 0 & 0 \\
0 & 0 & 0 & 0 & 0 & 0 & 0 & 0 & 1 & 0 \\
0 & 0 & 0 & 0 & 0 & 0 & 0 & 0 & 0 & 1
\end{matrix}\right):\Pic X\to\Pic X.$$ Applying this matrix to the Dynkin diagram gives us the following relabeling.
\begin{center}
\begin{tikzpicture}
    \node[shape=circle,draw=black,fill=black, label=$e_8-e_9$] (A) at (0,0) {} ;
    \node[shape=circle,draw=black,fill=black, label=below:$e_7-e_8$] (B) at (1.5,0) {} ;
    \node[shape=circle,draw=black,fill=black, label=$e_6-e_7$] (C) at (3,0) {} ;
    \node[shape=circle,draw=black,fill=black, label=below:$e_5-e_6$] (D) at (4.5,0) {} ;
    \node[shape=circle,draw=black,fill=black, label=$\ell-145$] (E) at (6,0) {} ;
    \node[shape=circle,draw=black,fill=black, label=below:$e_1-e_2$] (F) at (7.5,0) {} ;
    \node[shape=circle,draw=black,fill=black, label=$e_2-e_3$] (G) at (9,0) {} ;
    \node[shape=circle,draw=black,fill=black, label=$e_4-e_5$] (H) at (4.5,1) {} ;
    \node[shape=circle,draw=black,fill=black, label=$2\ell-456789$] (I) at (3,3) {} ;
    \node[shape=circle,draw=black,fill=black, label=right:$\ell-123$] (J) at (6,3) {} ;

    \path [-,draw=black](A) edge node[left] {} (B);
    \path [-,draw=black](B) edge node[left] {} (C);
    \path [-,draw=black](C) edge node[left] {} (D);
    \path [-,draw=black](E) edge node[left] {} (F);
    \path [-,draw=black](F) edge node[left] {} (G);
    \path [-,draw=black](D) edge node[left] {} (E);
    \path [-,draw=black](D) edge node[left] {} (H);
    \path [-,draw=black, bend left=10](I) edge node[left] {} (J);
    \path [-,draw=black, bend right=10](I) edge node[left] {} (J);
\end{tikzpicture}
\end{center}
\begin{itemize}
\item The first diagram gives us a pencil spanned by a cuspidal cubic, and the union of a line and a double line, where the double line is tangent to the flex point of the cuspidal cubic and the line goes through the cusp and the flex point. An example of such a pencil is that spanned by $x^2+y^2z$ and $xz^2$.

\item The second diagram corresponds to  a pencil spanned by the union of a conic and a line, and a triple line that is tangent to the conic. An example of such a pencil is that spanned by $(y^2+xz)z$ and $x^3$.
\end{itemize}
\begin{center}
    \textbf{Case 6:} $\wilde{D}_8$
\end{center}
Let's now look at the Dynkin diagram $\wilde{D}_8$, as depicted below.
\begin{center}
\begin{tikzpicture}
    \node[shape=circle,draw=black,fill=black, label=$\ell-123$] (A) at (0,0) {} ;
    \node[shape=circle,draw=black,fill=black, label=$e_2-e_3$] (B) at (0,-2) {} ;
    \node[shape=circle,draw=black,fill=black, label=$e_3-e_4$] (C) at (2,-1) {} ;
    \node[shape=circle,draw=black,fill=black, label=$e_4-e_5$] (D) at (4,-1) {} ;
    \node[shape=circle,draw=black,fill=black, label=$e_5-e_6$] (E) at (6,-1) {} ;
    \node[shape=circle,draw=black,fill=black, label=$e_6-e_7$] (H) at (8,-1) {} ;
    \node[shape=circle,draw=black,fill=black, label=$e_7-e_8$] (I) at (10,-1) {} ;
    \node[shape=circle,draw=black,fill=black, label=$e_8-e_9$] (F) at (12,0) {} ;
    \node[shape=circle,draw=black,fill=black, label=below:$2\ell-234567$] (G) at (12,-2) {} ;

    \path [-,draw=black](A) edge node[left] {} (C);
    \path [-,draw=black](B) edge node[left] {} (C);
    \path [-,draw=black](C) edge node[left] {} (D);
    \path [-,draw=black](D) edge node[left] {} (E);
    \path [-,draw=black](E) edge node[left] {} (H);
    \path [-,draw=black](H) edge node[left] {} (I);
    \path [-,draw=black](F) edge node[left] {} (I);
    \path [-,draw=black](G) edge node[left] {} (I);
\end{tikzpicture}
\end{center}
Using Lemma \ref{MW}, we know there are two $(-1)$-curves of this diagram; they are $e_1$ and $e_9$. 

A similar analysis as in Proposition \ref{blowdowns} reveals that there are-- up to isomorphism of the graph-- two ways to blow down the surface to $\P^2$. The details of this analysis have been omitted for the sake of brevity. The two blow-downs of the diagram (up to isomorphism) are represented by the diagrams below.
\begin{center}
\begin{tikzpicture}
    \node[shape=circle,draw=black,fill=none, label=$e_1$] (A) at (0,0) {} ;
    \node[shape=circle,draw=black,fill=none, label=$e_9$] (B) at (1.5,0) {} ;
    \node[shape=circle,draw=black,fill=black,label=$e_8-e_9$] (C) at (3,0) {} ;
    \node[shape=circle,draw=black,fill=black, label=$e_7-e_8$] (D) at (4.5,0) {} ;
    \node[shape=circle,draw=black,fill=black, label=$e_6-e_7$] (E) at (6,0) {} ;
    \node[shape=circle,draw=black,fill=black, label=$e_5-e_6$] (F) at (7.5,0) {} ;
    \node[shape=circle,draw=black,fill=black, label=$e_4-e_5$] (G) at (9,0) {} ;
    \node[shape=circle,draw=black,fill=black, label=$e_3-e_4$] (H) at (10.5,0) {} ;
    \node[shape=circle,draw=black,fill=black, label=$e_2-e_3$] (I) at (12,0) {} ;

    \path [-,draw=black](C) edge node[left] {} (B);
    \path [-,draw=black](D) edge node[left] {} (C);
    \path [-,draw=black](D) edge node[left] {} (E);
    \path [-,draw=black](F) edge node[left] {} (G);
    \path [-,draw=black](H) edge node[left] {} (I);
    \path [-,draw=black](E) edge node[left] {} (F);
    \path [-,draw=black](G) edge node[left] {} (H);
\end{tikzpicture}
\end{center}

\begin{center}
\begin{tikzpicture}
    \node[shape=circle,draw=black,fill=none, label=$e_1$] (A) at (0,0) {} ;
    \node[shape=circle,draw=black,fill=black, label=below:$\ell-123$] (B) at (1.5,0) {} ;
    \node[shape=circle,draw=black,fill=black, label=$e_3-e_4$] (C) at (3,0) {} ;
    \node[shape=circle,draw=black,fill=black, label=below:$e_4-e_5$] (D) at (4.5,0) {} ;
    \node[shape=circle,draw=black,fill=black, label=$e_5-e_6$] (E) at (6,0) {} ;
    \node[shape=circle,draw=black,fill=none, label=$e_9$] (F) at (7.5,0) {} ;
    \node[shape=circle,draw=black,fill=black, label=$e_8-e_9$] (G) at (9,0) {} ;
    \node[shape=circle,draw=black,fill=black, label=$e_7-e_8$] (H) at (10.5,0) {} ;
    \node[shape=circle,draw=black,fill=black, label=below:$2\ell-234567$] (I) at (12,0) {} ;

    \path [-,draw=black](A) edge node[left] {} (B);
    \path [-,draw=black](B) edge node[left] {} (C);
    \path [-,draw=black](C) edge node[left] {} (D);
    \path [-,draw=black](D) edge node[left] {} (E);
    \path [-,draw=black](F) edge node[left] {} (G);
    \path [-,draw=black](G) edge node[left] {} (H);
    \path [-,draw=black](I) edge node[left] {} (H);
\end{tikzpicture}
\end{center}
The second blow-down yields the following change-of-basis matrix. $$\left(\begin{matrix}
3 & 0 & 2 & 1 & 1 & 1 & 1 & 0 & 0 & 0 \\
-1 & 0 & -1 & 0 & 0 & 0 & -1 & 0 & 0 & 0 \\
-1 & 0 & -1 & 0 & 0 & -1 & 0 & 0 & 0 & 0 \\
-1 & 0 & -1 & 0 & -1 & 0 & 0 & 0 & 0 & 0 \\
-1 & 0 & -1 & -1 & 0 & 0 & 0 & 0 & 0 & 0 \\
0 & 1 & 0 & 0 & 0 & 0 & 0 & 0 & 0 & 0 \\
-2 & 0 & -1 & -1 & -1 & -1 & -1 & 0 & 0 & 0 \\
0 & 0 & 0 & 0 & 0 & 0 & 0 & 1 & 0 & 0 \\
0 & 0 & 0 & 0 & 0 & 0 & 0 & 0 & 1 & 0 \\
0 & 0 & 0 & 0 & 0 & 0 & 0 & 0 & 0 & 1
\end{matrix}\right):\Pic X\to\Pic X.$$ Applying the matrix to the Dynkin diagram gives us the following relabeling.
\begin{center}
\begin{tikzpicture}
    \node[shape=circle,draw=black,fill=black, label=$e_4-e_5$] (A) at (0,0) {} ;
    \node[shape=circle,draw=black,fill=black, label=$\ell-123$] (B) at (0,-2) {} ;
    \node[shape=circle,draw=black,fill=black, label=$e_3-e_4$] (C) at (2,-1) {} ;
    \node[shape=circle,draw=black,fill=black, label=$e_2-e_3$] (D) at (4,-1) {} ;
    \node[shape=circle,draw=black,fill=black, label=$e_1-e_2$] (E) at (6,-1) {} ;
    \node[shape=circle,draw=black,fill=black, label=below:$\ell-167$] (H) at (8,-1) {} ;
    \node[shape=circle,draw=black,fill=black, label=$e_7-e_8$] (I) at (10,-1) {} ;
    \node[shape=circle,draw=black,fill=black, label=$e_8-e_9$] (F) at (12,0) {} ;
    \node[shape=circle,draw=black,fill=black, label=$e_6-e_7$] (G) at (12,-2) {} ;

    \path [-,draw=black](A) edge node[left] {} (C);
    \path [-,draw=black](B) edge node[left] {} (C);
    \path [-,draw=black](C) edge node[left] {} (D);
    \path [-,draw=black](D) edge node[left] {} (E);
    \path [-,draw=black](E) edge node[left] {} (H);
    \path [-,draw=black](H) edge node[left] {} (I);
    \path [-,draw=black](F) edge node[left] {} (I);
    \path [-,draw=black](G) edge node[left] {} (I);
\end{tikzpicture}
\end{center}
\begin{itemize}
\item The first diagram corresponds to a pencil spanned by a cuspidal cubic and the union of a line tangent to the cubic at somewhere other than the cusp or flex point, and a conic that meets the cubic at the same point with multiplicity 6. (Note: in characteristic 2, all points other than the flex and cusp on a cuspidal cubic are sextactic.) An example of such a conic is that spanned by $(y^2+x^2+xz+z^2)(x+z)$ and $x^3+y^2z$.

\item The second diagram corresponds to a pencil spanned by a cuspidal cubic and the union of a line and double line, where the line is tangent to the cubic at the flex point and the double line is tangent to a sextactic point. An example of such a pencil is that spanned by $x^3+y^2z$ and $(x+z)^2z$.
\end{itemize}
\begin{center}
    \textbf{Case 7:} $\wilde{E}_8$
\end{center}
Let's now look at the Dynkin diagram $\wilde{E}_8$, as depicted below.
\begin{center}
\begin{tikzpicture}
    \node[shape=circle,draw=black,fill=black, label=below:$e_1-e_2$] (A) at (0,0) {} ;
    \node[shape=circle,draw=black,fill=black, label=$e_2-e_3$] (B) at (1.5,0) {} ;
    \node[shape=circle,draw=black,fill=black, label=below:$e_3-e_4$] (C) at (3,0) {} ;
    \node[shape=circle,draw=black,fill=black, label=$e_4-e_5$] (D) at (4.5,0) {} ;
    \node[shape=circle,draw=black,fill=black, label=below:$e_5-e_6$] (E) at (6,0) {} ;
    \node[shape=circle,draw=black,fill=black, label=$e_6-e_7$] (F) at (7.5,0) {} ;
    \node[shape=circle,draw=black,fill=black, label=below:$e_7-e_8$] (G) at (9,0) {} ;
    \node[shape=circle,draw=black,fill=black, label=$\ell-123$] (H) at (3,1) {} ;
    \node[shape=circle,draw=black,fill=black, label=$e_8-e_9$] (I) at (10.5,0) {} ;

    \path [-,draw=black](A) edge node[left] {} (B);
    \path [-,draw=black](B) edge node[left] {} (C);
    \path [-,draw=black](C) edge node[left] {} (D);
    \path [-,draw=black](E) edge node[left] {} (F);
    \path [-,draw=black](F) edge node[left] {} (G);
    \path [-,draw=black](D) edge node[left] {} (E);
    \path [-,draw=black](C) edge node[left] {} (H);
    \path [-,draw=black](I) edge node[left] {} (G);
\end{tikzpicture}
\end{center}
The only $(-1)$-curve of this diagram is $e_9$.

A similar analysis as in Proposition \ref{blowdowns} reveals that there is-- up to isomorphism of the graph-- one way to blow down the surface to $\P^2$. The details of this analysis have been omitted for the sake of brevity. The only way to blow down this diagram is depicted in the following diagram.
\begin{center}
\begin{tikzpicture}
    \node[shape=circle,draw=black,fill=none, label=$e_9$] (A) at (0,0) {} ;
    \node[shape=circle,draw=black,fill=black, label=$e_8-e_9$] (B) at (1.5,0) {} ;
    \node[shape=circle,draw=black,fill=black,label=$e_7-e_8$] (C) at (3,0) {} ;
    \node[shape=circle,draw=black,fill=black, label=$e_6-e_7$] (D) at (4.5,0) {} ;
    \node[shape=circle,draw=black,fill=black, label=$e_5-e_6$] (E) at (6,0) {} ;
    \node[shape=circle,draw=black,fill=black, label=$e_4-e_5$] (F) at (7.5,0) {} ;
    \node[shape=circle,draw=black,fill=black, label=$e_3-e_4$] (G) at (9,0) {} ;
    \node[shape=circle,draw=black,fill=black, label=$e_2-e_3$] (H) at (10.5,0) {} ;
    \node[shape=circle,draw=black,fill=black, label=$e_1-e_2$] (I) at (12,0) {} ;

    \path [-,draw=black](A) edge node[left] {} (B);
    \path [-,draw=black](D) edge node[left] {} (C);
    \path [-,draw=black](D) edge node[left] {} (E);
    \path [-,draw=black](F) edge node[left] {} (G);
    \path [-,draw=black](H) edge node[left] {} (I);
    \path [-,draw=black](E) edge node[left] {} (F);
    \path [-,draw=black](G) edge node[left] {} (H);
    \path [-,draw=black](B) edge node[left] {} (C);
\end{tikzpicture}
\end{center}
Since there is only one way to blow down this surface, there are no change of basis matrices.
\begin{itemize}
\item This diagram corresponds to a pencil spanned by a cuspidal cubic and a triple line that meets the cubic at its cusp with multiplicity 3. An example of such a pencil is that spanned by $x^3+y^2z$ and $y^3$.
\end{itemize}

\section{Unexpected Cubics in Characteristic 2}

Let $k$ be an algebraically closed field of characteristic 2, and let $\vhi\in k$ satisfy $\vhi^2+\vhi+1=0$.

We found in total eight types of configurations of points in $\P^2_k$ that yield unexpected cubics. Two come from the two blow-downs of $\wilde{A}_1^{\+8}$, three come from the four blow-downs of $\wilde{A}_1^{\+4}\+\wilde{D}_4$, and three come from the four blow-downs of $\wilde{A}_1^{\+2}\+\wilde{D}_6$. We itemize them below.

\begin{enumerate}

\item The two $\wilde{A}_1^{\+8}$ cases:
\begin{enumerate}

\item In the first $\wilde{A}_1^{\+8}$ case, the sheet of cubics spanned by $x^2(y+z)$, $y^2(x+z)$, and $z^2(x+y)$ produces cubic curves that contain the seven points $(1,0,0)\times 2$, $(0,1,0)\times 2$, $(0,0,1)\times 2$, and $(1,1,1)$, and have cusps at generic points.

\item In the second $\wilde{A}_1^{\+8}$ case (outlined in Akesseh's thesis), the sheet of cubics spanned by $xy(x+y)$, $xz(x+z)$, and $yz(y+z)$ produce cubic curves that contain the seven points of the Fano plane and have cusps at generic points.
\end{enumerate}

\item The three $\wilde{A}_1^{\+4}\+\wilde{D}_4$ cases:
\begin{enumerate}

\item In the first $\wilde{A}_1^{\+4}\+\wilde{D}_4$ blowdown, the sheet of cubics spanned by $x^2z+xy^2$, $x^2z+\vhi xz^2+\vhi^2 y^2z$, and $x^2z+xz^2+\vhi y^2z$ produce cubic curves containing the seven points $(0,0,1)\times 4$, $(1,0,0)\times 2$, and $(0,1,0)$, and have cusps at generic points.

\item In the second $\wilde{A}_1^{\+4}\+\wilde{D}_4$ blowdown, the sheet of cubics spanned by $(y^2+xz)(x+\vhi z)$, $xz(x+z)$, and $xz(x+\vhi^2 z)$ produces cubic curves that contain the seven points $(1,0,0)\times 2$, $(0,0,1)\times 2$, and $(0,1,0)\times 3$, and have cusps at generic points.

\item In the third $\wilde{A}_1^{\+4}\+\wilde{D}_4$ blowdown, the sheet of cubics spanned by $(y^2+xz)(x+\vhi z)$, $y^2(x+z)$, and $y^2(x+\vhi^2 z)$ produces cubic curves that contain the seven points $(0,0,1)\times 2$, $(\vhi,0,1)\times 2$, $(1,0,0)\times 2$, and $(0,1,0)$, and have cusps at generic points.
\end{enumerate}
\item The three $\wilde{A}_1^{\+2}\+\wilde{D}_6$ cases:
\begin{enumerate}

\item In the first $\wilde{A}_1^{\+2}\+\wilde{D}_6$ blowdown, the sheet of cubics spanned by $(y^2+xz)(x+z)$, $x^2z$, and $x^2(x+z)$ produces cubic curves that contain the seven points $(0,0,1)\times 4$ and $(0,1,0)\times 2$, and have cusps at generic points.

\item In the third $\wilde{A}_1^{\+2}\+\wilde{D}_6$ blowdown, the sheet of cubics spanned by $x^3+y^2z$, $xz(x+z)$, and $xz(x+\vhi z)$ produces a cubic curve that contains the seven points $(0,0,1)\times 2$, and $(0,1,0)\times 5$, and has a cusp at a generic point.

\item In the fourth $\wilde{A}_1^{\+2}\+\wilde{D}_6$ blowdown, the sheet of cubics spanned by $x(y^2+xz)$, $z(y^2+x^2+xz)$, and $(x+z)(y^2+\vhi x^2+xz)$ produces a cubic curve that contains the seven points $(0,0,1)\times 6$ and $(0,1,0)$, and has a cusp at a generic point.

\end{enumerate}
\end{enumerate}

The cuspidal curves produced by each sheet above is an unexpected cubic for a set $Z$ of seven points. The examples where some of the points of $Z$ are infinitely near are new.

\section{Quasi-Elliptic Fibrations in Characteristic 3}
Now let us consider quasi-elliptic fibrations in characteristic 3. According to Cossec and Dolgachev, a quasi-elliptic fibration in characteristic 3 will have a Dynkin diagram of $\wilde{A}_2^{\+4}$, $\wilde{A}_2\+\wilde{E}_6$, or $\wilde{E}_8$, \cite{CD}, Proposition 5.5.9, or \cite{CDL} Corollary 4.3.22.
\vspace{\baselineskip}
\begin{center}
    \textbf{Case 1:} $\wilde{A}_2^{\+4}$
\end{center}
Let us first consider the diagram $\wilde{A}_2^{\+4}$. We can label this diagram as follows.
\begin{center}
\begin{tikzpicture}
    \node[shape=circle,draw=black,fill=black,label=below:$\ell-123$] (A) at (0,0) {};
    \node[shape=circle,draw=black,fill=black,label=below:$\ell-456$] (B) at (4,0) {};
    \node[shape=circle,draw=black,fill=black,label=left:$\ell-789$] (C) at (2,2) {};

    \node[shape=circle,draw=black,fill=black,label=below:$\ell-147$] (F) at (6,0) {};
    \node[shape=circle,draw=black,fill=black,label=below:$\ell-258$] (G) at (10,0) {};
    \node[shape=circle,draw=black,fill=black,label=left:$\ell-369$] (H) at (8,2) {};
    
     \node[shape=circle,draw=black,fill=black,label=below:$\ell-159$] (I) at (0,-4) {};
    \node[shape=circle,draw=black,fill=black,label=below:$\ell-267$] (J) at (4,-4) {};
    \node[shape=circle,draw=black,fill=black,label=left:$\ell-348$] (K) at (2,-2) {};

    \node[shape=circle,draw=black,fill=black,label=below:$\ell-168$] (L) at (6,-4) {};
    \node[shape=circle,draw=black,fill=black,label=below:$\ell-249$] (M) at (10,-4) {};
    \node[shape=circle,draw=black,fill=black,label=left:$\ell-357$] (N) at (8,-2) {};

    \path [-](B) edge node[left] {} (C);
    \path [-](A) edge node[left] {} (C);
    \path [-](A) edge node[left] {} (B);

    \path [-](F) edge node[left] {} (H);
    \path [-](G) edge node[left] {} (H);
    \path [-](G) edge node[left] {} (F);
    
    \path [-](I) edge node[left] {} (J);
    \path [-](J) edge node[left] {} (K);
    \path [-](K) edge node[left] {} (I);

    \path [-](L) edge node[left] {} (M);
    \path [-](M) edge node[left] {} (N);
    \path [-](N) edge node[left] {} (L);
\end{tikzpicture}
\end{center}
Using Lemma \ref{MW}, we know there are nine $(-1)$-curves; according to this labeling, they are $e_1,\dots,e_9$. 

A similar analysis as in Proposition \ref{blowdowns} reveals that there are-- up to isomorphism of the graph-- three ways to blow down the surface to $\P^2$. The details of this analysis have been omitted for the sake of brevity. The three blow-downs of the diagram (up to isomorphism) are represented by the diagrams below.
\begin{center}
\begin{tikzpicture}
    \node[shape=circle,draw=black,fill=none, label=$e_1$] (A) at (0,0) {} ;
    \node[shape=circle,draw=black,fill=none, label=$e_2$] (B) at (1.5,0) {} ;
    \node[shape=circle,draw=black,fill=none, label=$e_3$] (C) at (3,0) {} ;
    \node[shape=circle,draw=black,fill=none, label=$e_4$] (D) at (4.5,0) {} ;
    \node[shape=circle,draw=black,fill=none, label=$e_5$] (E) at (6,0) {} ;
    \node[shape=circle,draw=black,fill=none, label=$e_6$] (F) at (7.5,0) {} ;
    \node[shape=circle,draw=black,fill=none, label=$e_7$] (G) at (9,0) {} ;
    \node[shape=circle,draw=black,fill=none, label=$e_8$] (H) at (10.5,0) {} ;
    \node[shape=circle,draw=black,fill=none, label=$e_9$] (I) at (12,0) {} ;

\end{tikzpicture}
\end{center}
\begin{center}
\begin{tikzpicture}
    \node[shape=circle,draw=black,fill=none, label=$e_1$] (A) at (0,0) {} ;
    \node[shape=circle,draw=black,fill=black, label=below:$\ell-123$] (B) at (1.5,0) {} ;
    \node[shape=circle,draw=black,fill=none,label=$e_8$] (C) at (3,0) {} ;
    \node[shape=circle,draw=black,fill=black, label=below:$\ell-258$] (D) at (4.5,0) {} ;
    \node[shape=circle,draw=black,fill=none, label=$e_6$] (E) at (6,0) {} ;
    \node[shape=circle,draw=black,fill=black, label=below:$\ell-267$] (F) at (7.5,0) {} ;
    \node[shape=circle,draw=black,fill=none, label=$e_4$] (G) at (9,0) {} ;
    \node[shape=circle,draw=black,fill=black, label=below:$\ell-249$] (H) at (10.5,0) {} ;
    \node[shape=circle,draw=black,fill=black, label=$\ell-357$] (I) at (12,0) {} ;

    \path [-,draw=black](A) edge node[left] {} (B);
    \path [-,draw=black](D) edge node[left] {} (C);
  
    \path [-,draw=black](H) edge node[left] {} (I);
    \path [-,draw=black](E) edge node[left] {} (F);
    \path [-,draw=black](G) edge node[left] {} (H);
\end{tikzpicture}
\end{center}
\begin{center}
\begin{tikzpicture}
    \node[shape=circle,draw=black,fill=none, label=$e_1$] (A) at (0,0) {} ;
    \node[shape=circle,draw=black,fill=black, label=below:$\ell-123$] (B) at (1.5,0) {} ;
    \node[shape=circle,draw=black,fill=black,label=$\ell-789$] (C) at (3,0) {} ;
    \node[shape=circle,draw=black,fill=none, label=below:$e_4$] (D) at (4.5,0) {} ;
    \node[shape=circle,draw=black,fill=black, label=$\ell-249$] (E) at (6,0) {} ;
    \node[shape=circle,draw=black,fill=black, label=below:$\ell-357$] (F) at (7.5,0) {} ;
    \node[shape=circle,draw=black,fill=none, label=$e_6$] (G) at (9,0) {} ;
    \node[shape=circle,draw=black,fill=black, label=below:$\ell-369$] (H) at (10.5,0) {} ;
    \node[shape=circle,draw=black,fill=black, label=$\ell-258$] (I) at (12,0) {} ;

    \path [-,draw=black](A) edge node[left] {} (B);
    \path [-,draw=black](B) edge node[left] {} (C);
    \path [-,draw=black](D) edge node[left] {} (E);
    \path [-,draw=black](H) edge node[left] {} (I);
    \path [-,draw=black](E) edge node[left] {} (F);
    \path [-,draw=black](G) edge node[left] {} (H);
\end{tikzpicture}
\end{center}
The second two blow-downs give us the change-of-basis matrices $$\left(\begin{matrix}
3 & 0 & 2 & 1 & 0 & 1 & 0 & 1 & 0 & 1 \\
-1 & 0 & -1 & -1 & 0 & 0 & 0 & 0 & 0 & 0 \\
0 & 1 & 0 & 0 & 0 & 0 & 0 & 0 & 0 & 0 \\
-1 & 0 & -1 & 0 & 0 & -1 & 0 & 0 & 0 & 0 \\
0 & 0 & 0 & 0 & 0 & 0 & 0 & 0 & 1 & 0 \\
-1 & 0 & -1 & 0 & 0 & 0 & 0 & -1 & 0 & 0 \\
0 & 0 & 0 & 0 & 0 & 0 & 1 & 0 & 0 & 0 \\
-2 & 0 & -1 & -1 & 0 & -1 & 0 & -1 & 0 & -1 \\
-1 & 0 & -1 & 0 & 0 & 0 & 0 & 0 & 0 & -1 \\
0 & 0 & 0 & 0 & 1 & 0 & 0 & 0 & 0 & 0
\end{matrix}\right):\Pic X\to\Pic X$$ and $$\left(\begin{matrix}
4 & 0 & 2 & 2 & 0 & 1 & 0 & 1 & 1 & 2 \\
-2 & 0 & -1 & -1 & 0 & 0 & 0 & -1 & -1 & -1 \\
-1 & 0 & -1 & -1 & 0 & 0 & 0 & 0 & 0 & 0 \\
0 & 1 & 0 & 0 & 0 & 0 & 0 & 0 & 0 & 0 \\
-2 & 0 & -1 & -1 & 0 & -1 & 0 & -1 & 0 & -1 \\
-1 & 0 & -1 & 0 & 0 & 0 & 0 & 0 & 0 & -1 \\
0 & 0 & 0 & 0 & 1 & 0 & 0 & 0 & 0 & 0 \\
-2 & 0 & -1 & -1 & 0 & -1 & 0 & 0 & -1 & -1 \\
-1 & 0 & 0 & -1 & 0 & 0 & 0 & 0 & 0 & -1 \\
0 & 0 & 0 & 0 & 0 & 0 & 1 & 0 & 0 & 0
\end{matrix}\right):\Pic X\to\Pic X$$
respectively. Applying these matrices to the Dynkin diagram above gives us the following relabelings.
\begin{center}
\begin{tikzpicture}
    \node[shape=circle,draw=black,fill=black,label=below:$e_1-e_2$] (A) at (0,0) {};
    \node[shape=circle,draw=black,fill=black,label=below:$2\ell-156789$] (B) at (4,0) {};
    \node[shape=circle,draw=black,fill=black,label=left:$\ell-134$] (C) at (2,2) {};

    \node[shape=circle,draw=black,fill=black,label=below:$2\ell-123789$] (F) at (6,0) {};
    \node[shape=circle,draw=black,fill=black,label=below:$e_3-e_4$] (G) at (10,0) {};
    \node[shape=circle,draw=black,fill=black,label=left:$\ell-356$] (H) at (8,2) {};
    
     \node[shape=circle,draw=black,fill=black,label=below:$\ell-125$] (I) at (0,-4) {};
    \node[shape=circle,draw=black,fill=black,label=below:$e_5-e_6$] (J) at (4,-4) {};
    \node[shape=circle,draw=black,fill=black,label=left:$2\ell-345789$] (K) at (2,-2) {};

    \node[shape=circle,draw=black,fill=black,label=below:$-K-e_7+e_9$] (L) at (6,-4) {};
    \node[shape=circle,draw=black,fill=black,label=below:$e_8-e_9$] (M) at (10,-4) {};
    \node[shape=circle,draw=black,fill=black,label=left:$e_7-e_8$] (N) at (8,-2) {};

    \path [-](B) edge node[left] {} (C);
    \path [-](A) edge node[left] {} (C);
    \path [-](A) edge node[left] {} (B);

    \path [-](F) edge node[left] {} (H);
    \path [-](G) edge node[left] {} (H);
    \path [-](G) edge node[left] {} (F);
    
    \path [-](I) edge node[left] {} (J);
    \path [-](J) edge node[left] {} (K);
    \path [-](K) edge node[left] {} (I);

    \path [-](L) edge node[left] {} (M);
    \path [-](M) edge node[left] {} (N);
    \path [-](N) edge node[left] {} (L);
\end{tikzpicture}
\end{center}
\noindent\makebox[\linewidth]{\rule{\textwidth}{0.4pt}}
\begin{center}
\begin{tikzpicture}
    \node[shape=circle,draw=black,fill=black,label=below:$e_2-e_3$] (A) at (0,0) {};
    \node[shape=circle,draw=black,fill=black,label=267:$-K-e_1+e_3$] (B) at (4,0) {};
    \node[shape=circle,draw=black,fill=black,label=left:$e_1-e_2$] (C) at (2,2) {};

    \node[shape=circle,draw=black,fill=black,label=273:$-K-e_7+e_9$] (F) at (6,0) {};
    \node[shape=circle,draw=black,fill=black,label=below:$e_7-e_8$] (G) at (10,0) {};
    \node[shape=circle,draw=black,fill=black,label=left:$e_8-e_9$] (H) at (8,2) {};
    
     \node[shape=circle,draw=black,fill=black,label=below:$\ell-123$] (I) at (0,-4) {};
    \node[shape=circle,draw=black,fill=black,label=below:$\ell-789$] (J) at (4,-4) {};
    \node[shape=circle,draw=black,fill=black,label=left:$\ell-456$] (K) at (2,-2) {};

    \node[shape=circle,draw=black,fill=black,label=below:$-K-e_4+e_6$] (L) at (6,-4) {};
    \node[shape=circle,draw=black,fill=black,label=below:$e_5-e_6$] (M) at (10,-4) {};
    \node[shape=circle,draw=black,fill=black,label=left:$e_4-e_5$] (N) at (8,-2) {};

    \path [-](B) edge node[left] {} (C);
    \path [-](A) edge node[left] {} (C);
    \path [-](A) edge node[left] {} (B);

    \path [-](F) edge node[left] {} (H);
    \path [-](G) edge node[left] {} (H);
    \path [-](G) edge node[left] {} (F);
    
    \path [-](I) edge node[left] {} (J);
    \path [-](J) edge node[left] {} (K);
    \path [-](K) edge node[left] {} (I);

    \path [-](L) edge node[left] {} (M);
    \path [-](M) edge node[left] {} (N);
    \path [-](N) edge node[left] {} (L);
\end{tikzpicture}
\end{center}
\begin{itemize}
\item The original Dynkin diagram corresponds to a pencil spanned by the union of three concurrent lines and the union of three different concurrent lines. An example of such a pencil is that spanned by $x(x+z)(x-z)$ and $y(y+z)(y-z)$.

\item The second diagram corresponds to a pencil spanned by the union of a conic and a line, and the union of a line tangent to the first conic and a conic that contains the intersection of the two lines and the intersection of the first conic and the first line. An example of such a pencil is that spanned by $2x^2z+yz^2$ and $xy^2+2yz^2$.

\item The third diagram corresponds to a pencil spanned by a flexible cuspidal cubic and the union of three lines, two of which are tangent to flex points on the cubic and one which intersects the cusp with multiplicity 3. An example of such a pencil is that spanned by $x^3-y^2z$ and $yz(y-z)$.
\end{itemize}
\begin{center}
    \textbf{Case 2:}  $\wilde{A}_2\+\wilde{E}_6$
\end{center}
Let us now consider the diagram $\wilde{A}_2\+\wilde{E}_6$. A labeling is pictured below.
\begin{center}
\begin{tikzpicture}
    \node[shape=circle,draw=black,fill=black,label=below:$\ell-123$] (A) at (0,0) {};
    \node[shape=circle,draw=black,fill=black,label=below:$\ell-456$] (B) at (4,0) {};
    \node[shape=circle,draw=black,fill=black,label=left:$\ell-789$] (C) at (2,2) {};
    
    \node[shape=circle,draw=black,fill=black,label=below:$e_2-e_3$] (D) at (-2,5) {};
    \node[shape=circle,draw=black,fill=black,label=below:$e_1-e_2$] (E) at (0,5) {};
    \node[shape=circle,draw=black,fill=black,label=$\ell-147$] (F) at (2,5) {};
    \node[shape=circle,draw=black,fill=black,label=below:$e_7-e_8$] (G) at (4,5) {};
    \node[shape=circle,draw=black,fill=black,label=below:$e_8-e_9$] (H) at (6,5) {};
    \node[shape=circle,draw=black,fill=black,label=left:$e_4-e_5$] (I) at (2,4) {};
    \node[shape=circle,draw=black,fill=black,label=left:$e_5-e_6$] (J) at (2,3) {};

    \path [-](B) edge node[left] {} (C);
    \path [-](A) edge node[left] {} (C);
    \path [-](A) edge node[left] {} (B);
    
    \path [-](D) edge node[left] {} (E);
    \path [-](E) edge node[left] {} (F);
    \path [-](F) edge node[left] {} (G);
     \path [-](G) edge node[left] {} (H);
    \path [-](F) edge node[left] {} (I);
    \path [-](I) edge node[left] {} (J);

\end{tikzpicture}
\end{center}
Using Lemma \ref{MW}, we know this surface has three $(-1)$-curves; they are $e_3$, $e_6$, and $e_9$. 

A similar analysis as in Proposition \ref{blowdowns} reveals that there are-- up to isomorphism of the graph-- three ways to blow down the surface to $\P^2$. The details of this analysis have been omitted for the sake of brevity. There are three ways (up to isomorphism) of blowing this surface down to $\P^2$. They are depicted below.
\begin{center}
\begin{tikzpicture}
    \node[shape=circle,draw=black,fill=none, label=$e_3$] (A) at (0,0) {} ;
    \node[shape=circle,draw=black,fill=black, label=below:$e_2-e_3$] (B) at (1.5,0) {} ;
    \node[shape=circle,draw=black,fill=black,label=$e_1-e_2$] (C) at (3,0) {} ;
    \node[shape=circle,draw=black,fill=none, label=below:$e_6$] (D) at (4.5,0) {} ;
    \node[shape=circle,draw=black,fill=black, label=$e_5-e_6$] (E) at (6,0) {} ;
    \node[shape=circle,draw=black,fill=black, label=below:$e_4-e_5$] (F) at (7.5,0) {} ;
    \node[shape=circle,draw=black,fill=none, label=$e_9$] (G) at (9,0) {} ;
    \node[shape=circle,draw=black,fill=black, label=below:$e_8-e_9$] (H) at (10.5,0) {} ;
    \node[shape=circle,draw=black,fill=black, label=$e_7-e_8$] (I) at (12,0) {} ;

    \path [-,draw=black](A) edge node[left] {} (B);
    \path [-,draw=black](B) edge node[left] {} (C);
    \path [-,draw=black](D) edge node[left] {} (E);
    \path [-,draw=black](H) edge node[left] {} (I);
    \path [-,draw=black](E) edge node[left] {} (F);
    \path [-,draw=black](G) edge node[left] {} (H);
\end{tikzpicture}
\end{center}
\begin{center}
\begin{tikzpicture}
    \node[shape=circle,draw=black,fill=none, label=$e_6$] (A) at (0,0) {} ;
    \node[shape=circle,draw=black,fill=black, label=$e_5-e_6$] (B) at (1.5,0) {} ;
    \node[shape=circle,draw=black,fill=black, label=$e_4-e_5$] (C) at (3,0) {} ;
    \node[shape=circle,draw=black,fill=black, label=below:$\ell-147$] (D) at (4.5,0) {} ;
    \node[shape=circle,draw=black,fill=black, label=$e_1-e_2$] (E) at (6,0) {} ;
    \node[shape=circle,draw=black,fill=black, label=$e_2-e_3$] (F) at (7.5,0) {} ;
    \node[shape=circle,draw=black,fill=none, label=$e_9$] (G) at (9,0) {} ;
    \node[shape=circle,draw=black,fill=black, label=below:$\ell-789$] (H) at (10.5,0) {} ;
    \node[shape=circle,draw=black,fill=black, label=$\ell-123$] (I) at (12,0) {} ;
    
     \path [-,draw=black](A) edge node[left] {} (B);
    \path [-,draw=black](B) edge node[left] {} (C);
    \path [-,draw=black](C) edge node[left] {} (D);
    \path [-,draw=black](D) edge node[left] {} (E);
    \path [-,draw=black](E) edge node[left] {} (F);
    \path [-,draw=black](G) edge node[left] {} (H);
    \path [-,draw=black](I) edge node[left] {} (H);

\end{tikzpicture}
\end{center}
\begin{center}
\begin{tikzpicture}
    \node[shape=circle,draw=black,fill=none, label=$e_3$] (A) at (0,0) {} ;
    \node[shape=circle,draw=black,fill=black, label=below:$e_2-e_3$] (B) at (1.5,0) {} ;
    \node[shape=circle,draw=black,fill=black,label=$e_1-e_2$] (C) at (3,0) {} ;
    \node[shape=circle,draw=black,fill=black, label=below:$\ell-147$] (D) at (4.5,0) {} ;
    \node[shape=circle,draw=black,fill=black, label=$e_4-e_5$] (E) at (6,0) {} ;
    \node[shape=circle,draw=black,fill=none, label=below:$e_6$] (F) at (7.5,0) {} ;
    \node[shape=circle,draw=black,fill=black, label=$\ell-456$] (G) at (9,0) {} ;
    \node[shape=circle,draw=black,fill=none, label=below:$e_9$] (H) at (10.5,0) {} ;
    \node[shape=circle,draw=black,fill=black, label=$e_8-e_9$] (I) at (12,0) {} ;

    \path [-,draw=black](A) edge node[left] {} (B);
    \path [-,draw=black](D) edge node[left] {} (C);
  \path [-,draw=black](C) edge node[left] {} (B);
  \path [-,draw=black](E) edge node[left] {} (D);
  \path [-,draw=black](F) edge node[left] {} (G);
  \path [-,draw=black](H) edge node[left] {} (I);
\end{tikzpicture}
\end{center}
The second two blow-downs give us the change-of-basis matrices $$\left(\begin{matrix}
3 & 1 & 1 & 1 & 0 & 0 & 0 & 2 & 1 & 0 \\
-1 & 0 & 0 & -1 & 0 & 0 & 0 & -1 & 0 & 0 \\
-1 & 0 & -1 & 0 & 0 & 0 & 0 & -1 & 0 & 0 \\
-1 & -1 & 0 & 0 & 0 & 0 & 0 & -1 & 0 & 0 \\
0 & 0 & 0 & 0 & 1 & 0 & 0 & 0 & 0 & 0 \\
0 & 0 & 0 & 0 & 0 & 1 & 0 & 0 & 0 & 0 \\
0 & 0 & 0 & 0 & 0 & 0 & 1 & 0 & 0 & 0 \\
-2 & -1 & -1 & -1 & 0 & 0 & 0 & -1 & -1 & 0 \\
-1 & 0 & 0 & 0 & 0 & 0 & 0 & -1 & -1 & 0 \\
0 & 0 & 0 & 0 & 0 & 0 & 0 & 0 & 0 & 1
\end{matrix}\right):\Pic X\to\Pic X$$ and $$\left(\begin{matrix}
2 & 0 & 0 & 0 & 1 & 1 & 0 & 1 & 0 & 0 \\
-1 & 0 & 0 & 0 & 0 & -1 & 0 & -1 & 0 & 0 \\
-1 & 0 & 0 & 0 & -1 & 0 & 0 & -1 & 0 & 0 \\
0 & 1 & 0 & 0 & 0 & 0 & 0 & 0 & 0 & 0 \\
0 & 0 & 1 & 0 & 0 & 0 & 0 & 0 & 0 & 0 \\
0 & 0 & 0 & 1 & 0 & 0 & 0 & 0 & 0 & 0 \\
-1 & 0 & 0 & 0 & -1 & -1 & 0 & 0 & 0 & 0 \\
0 & 0 & 0 & 0 & 0 & 0 & 1 & 0 & 0 & 0 \\
0 & 0 & 0 & 0 & 0 & 0 & 0 & 0 & 1 & 0 \\
0 & 0 & 0 & 0 & 0 & 0 & 0 & 0 & 0 & 1
\end{matrix}\right):\Pic X\to\Pic X$$ respectively. Applying these matrices to the Dynkin diagram gives us the relabelings depicted below.
\begin{center}
\begin{tikzpicture}
    \node[shape=circle,draw=black,fill=black,label=below:$e_7-e_8$] (A) at (0,0) {};
    \node[shape=circle,draw=black,fill=black,label=below:$e_8-e_9$] (B) at (4,0) {};
    \node[shape=circle,draw=black,fill=black,label=left:$-K-e_7+e_9$] (C) at (2,2) {};
    
    \node[shape=circle,draw=black,fill=black,label=below:$e_1-e_2$] (D) at (-2,5) {};
    \node[shape=circle,draw=black,fill=black,label=below:$e_2-e_3$] (E) at (0,5) {};
    \node[shape=circle,draw=black,fill=black,label=$e_3-e_4$] (F) at (2,5) {};
    \node[shape=circle,draw=black,fill=black,label=below:$\ell-123$] (G) at (4,5) {};
    \node[shape=circle,draw=black,fill=black,label=$\ell-789$] (H) at (6,5) {};
    \node[shape=circle,draw=black,fill=black,label=left:$e_4-e_5$] (I) at (2,4) {};
    \node[shape=circle,draw=black,fill=black,label=left:$e_5-e_6$] (J) at (2,3) {};

    \path [-](B) edge node[left] {} (C);
    \path [-](A) edge node[left] {} (C);
    \path [-](A) edge node[left] {} (B);
    
    \path [-](D) edge node[left] {} (E);
    \path [-](E) edge node[left] {} (F);
    \path [-](F) edge node[left] {} (G);
     \path [-](G) edge node[left] {} (H);
    \path [-](F) edge node[left] {} (I);
    \path [-](I) edge node[left] {} (J);

\end{tikzpicture}
\end{center}
\noindent\makebox[\linewidth]{\rule{\textwidth}{0.4pt}}
\begin{center}
\begin{tikzpicture}
    \node[shape=circle,draw=black,fill=black,label=below:$2\ell-123456$] (A) at (0,0) {};
    \node[shape=circle,draw=black,fill=black,label=below:$\ell-689$] (B) at (4,0) {};
    \node[shape=circle,draw=black,fill=black,label=left:$e_6-e_7$] (C) at (2,2) {};
    
    \node[shape=circle,draw=black,fill=black,label=below:$e_4-e_5$] (D) at (-2,5) {};
    \node[shape=circle,draw=black,fill=black,label=below:$e_3-e_4$] (E) at (0,5) {};
    \node[shape=circle,draw=black,fill=black,label=$e_2-e_3$] (F) at (2,5) {};
    \node[shape=circle,draw=black,fill=black,label=below:$\ell-128$] (G) at (4,5) {};
    \node[shape=circle,draw=black,fill=black,label=$e_8-e_9$] (H) at (6,5) {};
    \node[shape=circle,draw=black,fill=black,label=left:$e_1-e_2$] (I) at (2,4) {};
    \node[shape=circle,draw=black,fill=black,label=left:$\ell-167$] (J) at (2,3) {};

    \path [-](B) edge node[left] {} (C);
    \path [-](A) edge node[left] {} (C);
    \path [-](A) edge node[left] {} (B);
    
    \path [-](D) edge node[left] {} (E);
    \path [-](E) edge node[left] {} (F);
    \path [-](F) edge node[left] {} (G);
     \path [-](G) edge node[left] {} (H);
    \path [-](F) edge node[left] {} (I);
    \path [-](I) edge node[left] {} (J);

\end{tikzpicture}
\end{center}
\begin{itemize}
\item The original diagram corresponds to a pencil spanned by the union of three lines and a triple line. An example of such a pencil is that spanned by $xyz$ and $(x+y+z)^3$.

\item The second diagram corresponds to a pencil spanned by a flexible cuspidal cubic and the union of a line and a double line, with the line meeting the cubic at a flex point with multiplicity three, and the double line intersecting the cusp of the cubic with multiplicity three.

\item The third diagram corresponds to a pencil spanned by the union of a conic and a line and the union of a line and a double line, with the line containing one of the intersection points of the conic and the first line, and the double line is tangent to the conic at the other point it meets the second line. An example of such a pencil is that spanned by $x^2(x-y)$ and $(2xy+2xz+y^2)z$.
\end{itemize}
\begin{center}
    \textbf{Case 3:} $\wilde{E}_8$
\end{center}
Let us now consider the Dynkin diagram $\wilde{E}_8$, as depicted below.
\begin{center}
\begin{tikzpicture}
    \node[shape=circle,draw=black,fill=black, label=below:$e_1-e_2$] (A) at (0,0) {} ;
    \node[shape=circle,draw=black,fill=black, label=$e_2-e_3$] (B) at (1.5,0) {} ;
    \node[shape=circle,draw=black,fill=black, label=below:$e_3-e_4$] (C) at (3,0) {} ;
    \node[shape=circle,draw=black,fill=black, label=$e_4-e_5$] (D) at (4.5,0) {} ;
    \node[shape=circle,draw=black,fill=black, label=below:$e_5-e_6$] (E) at (6,0) {} ;
    \node[shape=circle,draw=black,fill=black, label=$e_6-e_7$] (F) at (7.5,0) {} ;
    \node[shape=circle,draw=black,fill=black, label=below:$e_7-e_8$] (G) at (9,0) {} ;
    \node[shape=circle,draw=black,fill=black, label=$\ell-123$] (H) at (3,1) {} ;
    \node[shape=circle,draw=black,fill=black, label=$e_8-e_9$] (I) at (10.5,0) {} ;

    \path [-,draw=black](A) edge node[left] {} (B);
    \path [-,draw=black](B) edge node[left] {} (C);
    \path [-,draw=black](C) edge node[left] {} (D);
    \path [-,draw=black](E) edge node[left] {} (F);
    \path [-,draw=black](F) edge node[left] {} (G);
    \path [-,draw=black](D) edge node[left] {} (E);
    \path [-,draw=black](C) edge node[left] {} (H);
    \path [-,draw=black](I) edge node[left] {} (G);
\end{tikzpicture}
\end{center}
Using Lemma \ref{MW}, we know there is only one $(-1)$-curve of this diagram; it is $e_9$. 

A similar analysis as in Proposition \ref{blowdowns} reveals that there is-- up to isomorphism of the graph-- one way to blow down the surface to $\P^2$. The details of this analysis have been omitted for the sake of brevity. The only way to blow down this diagram is depicted in the following diagram.
\begin{center}
\begin{tikzpicture}
    \node[shape=circle,draw=black,fill=none, label=$e_9$] (A) at (0,0) {} ;
    \node[shape=circle,draw=black,fill=black, label=$e_8-e_9$] (B) at (1.5,0) {} ;
    \node[shape=circle,draw=black,fill=black,label=$e_7-e_8$] (C) at (3,0) {} ;
    \node[shape=circle,draw=black,fill=black, label=$e_6-e_7$] (D) at (4.5,0) {} ;
    \node[shape=circle,draw=black,fill=black, label=$e_5-e_6$] (E) at (6,0) {} ;
    \node[shape=circle,draw=black,fill=black, label=$e_4-e_5$] (F) at (7.5,0) {} ;
    \node[shape=circle,draw=black,fill=black, label=$e_3-e_4$] (G) at (9,0) {} ;
    \node[shape=circle,draw=black,fill=black, label=$e_2-e_3$] (H) at (10.5,0) {} ;
    \node[shape=circle,draw=black,fill=black, label=$e_1-e_2$] (I) at (12,0) {} ;

    \path [-,draw=black](A) edge node[left] {} (B);
    \path [-,draw=black](D) edge node[left] {} (C);
    \path [-,draw=black](D) edge node[left] {} (E);
    \path [-,draw=black](F) edge node[left] {} (G);
    \path [-,draw=black](H) edge node[left] {} (I);
    \path [-,draw=black](E) edge node[left] {} (F);
    \path [-,draw=black](G) edge node[left] {} (H);
    \path [-,draw=black](B) edge node[left] {} (C);
\end{tikzpicture}
\end{center}
Since there is only one way to blow down this surface, there are no change of basis matrices.
\begin{itemize}
\item This diagram corresponds to a pencil spanned by a cuspidal cubic and a triple line that meets the cubic at its cusp with multiplicity 3. An example of such a pencil is that spanned by $x^3+y^2z$ and $y^3$.
\end{itemize}
\begin{thm}\label{thm3}
Let $k$ be a field of characteristic 3. There are no unexpected cubic curves in $\P^2_k$ coming from the quasi-elliptic fibrations listed above.
\end{thm}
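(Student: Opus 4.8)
The plan is to pass through each blow-down enumerated above for the three characteristic-$3$ Dynkin types $\wilde{A}_2^{\+4}$, $\wilde{A}_2\+\wilde{E}_6$, and $\wilde{E}_8$, and to show in every case that it does not witness an unexpected cubic in the sense of Proposition \ref{uq}. Recall what such a witness looks like: a seven-point set $Z$ admitting an unexpected cubic produces, for \emph{general} $P\in\P^2$, a pencil $\Pcal_P$ of cubics whose nine base points are the seven points of $Z$, the point $P$, and a point $P_9$ infinitely near $P$; and by Lemma \ref{l2}, $Z$ imposes independent conditions on cubics while the general cubic through $Z$ is reduced and irreducible. So for each listed blow-down the question is whether its nine blown-up points can be partitioned as $Z\sqcup\{P,P_9\}$ with $P_9$ infinitely near $P$, in such a way that $P$ ranges over a two-dimensional family while the seven points of $Z$ remain fixed and inherit the conclusions of Lemma \ref{l2}.

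First I would clear away the rigid blow-downs. The unique $\wilde{E}_8$ blow-down has all nine base points in one length-nine infinitely near chain over the cusp of a cuspidal cubic, as exhibited by the pencil $\langle x^3+y^2z,\ y^3\rangle$; the first $\wilde{A}_2^{\+4}$ blow-down has nine distinct base points pinned as the intersection points of two triples of concurrent lines, as in $\langle x(x+z)(x-z),\ y(y+z)(y-z)\rangle$; and the first $\wilde{A}_2\+\wilde{E}_6$ blow-down has three length-three infinitely near clusters over the three points where a triple line meets a triangle, as in $\langle xyz,\ (x+y+z)^3\rangle$. In each of these there is no base point that can be moved over a two-parameter family with the other eight held in place — equivalently, no general $P$ is available — so none of these yields an unexpected cubic.

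For the remaining blow-downs ($\wilde{A}_2^{\+4}$ blow-downs two and three, and $\wilde{A}_2\+\wilde{E}_6$ blow-downs two and three) I would use the explicit pencils recorded above to locate every infinitely near pair $(P,P_9)$ inside the base scheme. In characteristic $3$ such pairs always sit over a distinguished point of the configuration: the cusp of a cuspidal cubic, a flex point (note that in characteristic $3$ every smooth point of the cubic $x^3-y^2z$ is a flex), or a point where a line is tangent to a conic. A cusp or a tangency point is rigidly determined by the configuration and so cannot be general; a flex can vary, but only in a one-dimensional family, again too small to be the general point $P$. When this reasoning does not immediately apply, I would invoke Lemma \ref{l2} directly: for each candidate partition $Z\sqcup\{P,P_9\}$ the seven residual points turn out to be too special — four of them collinear, or six of them on a conic, or concentrated in an overlong infinitely near cluster — so that every cubic through $Z$ is reducible, contradicting the irreducibility clause. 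Assembling these finitely many checks proves the theorem, and Corollary \ref{no3} then follows by feeding it into Proposition \ref{uq}.

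The step I expect to be the main obstacle is the exhaustive treatment of the four non-rigid blow-downs: for each one must identify \emph{all} ways of designating one base point — together with an infinitely near companion — as a putative general point, and then verify in every instance either that the companion confines $P$ to a proper subvariety of $\P^2$ or that the complementary seven points violate a conclusion of Lemma \ref{l2}. A secondary subtlety worth emphasizing is that a positive-dimensional modulus of the \emph{pencil}, which several characteristic-$2$ configurations possess, is not by itself enough; what is required is that the point $P$ itself sweep a two-dimensional family in $\P^2$, and this is exactly what the characteristic-$3$ configurations fail to provide.
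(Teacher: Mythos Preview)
Your plan is sound and would reach the theorem, but the paper takes a shorter and more computational route. Instead of handling all seven blow-downs, the paper first observes that by Proposition \ref{uq} the nine base points of any quasi-elliptic pencil coming from an unexpected cubic must contain a cluster of length \emph{exactly} two --- the general point $P$ together with one point infinitely near it, and nothing further near either. This single criterion immediately discards every blow-down whose base clusters all have length $1$ or length $\geq 3$, so only two labelings survive: the second $\wilde{A}_2^{\+4}$ labeling (three length-two clusters and one length-three cluster) and the third $\wilde{A}_2\+\wilde{E}_6$ labeling (a length-five, a length-two, and a length-two). Your ``rigid'' cases, together with blow-down three of $\wilde{A}_2^{\+4}$ and blow-down two of $\wilde{A}_2\+\wilde{E}_6$ (each built from length-three clusters only), are all eliminated in one stroke rather than one at a time.

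For the two surviving cases the paper's arguments also diverge from yours. In the $\wilde{A}_2\+\wilde{E}_6$ case the paper reasons, essentially as you propose, that whichever length-two cluster one nominates as $\{P,P_9\}$, the collinearity constraints recorded in the Dynkin labels force $P$ onto a fixed line of the configuration, so $P$ is not general. In the $\wilde{A}_2^{\+4}$ case, however, the paper does not argue qualitatively about distinguished points at all: it writes down an explicit two-dimensional linear system of cubics through the seven candidate base points (by adjoining $(xz+2y^2)(x+2y)$ to the spanning pencil), forms the $3\times 3$ matrix whose kernel parametrizes members with a double point at a general $(a,b,c)$, and checks that its determinant is a nonzero polynomial in $a,b,c$. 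Your plan would ultimately have to confront this same case, and your ``tangency points are rigidly determined by the configuration'' sketch is the right intuition but would need care --- the configuration you can hold fixed is the seven-point set $Z$, not the pencil, and the tangency points you describe belong to auxiliary cubics that themselves move with $P$. The paper's determinant computation sidesteps that subtlety entirely.
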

\begin{proof}
Recall that an unexpected cubic curve comes from choosing seven points $P_1,P_2,\dots,P_7\in\P^2$ such that there is a cubic with a double point at the general point $P$. By Proposition \ref{uq}, an unexpected cubic yields a quasi-elliptic fibration that is isomorphic to the blowup of $\P^2_k$ at the following nine points: the seven points $P_1,\dots,P_7$, a general point $P$ and a point infinitely-near $P$. Since $P$ is general, it will not be equal to any of the $P_i$. Therefore we need only consider the quasi-elliptic fibrations that are attained by blowing up $\P^2_k$ at some point exactly twice, with the other seven blowup points elsewhere. Out of all of the quasi-elliptic fibrations we've seen in this chapter, there are only two that come from blowing up $\P^2_k$ at some point exactly twice. We need only check that quasi-elliptic fibrations with the Dynkin diagrams
\begin{center}
\begin{tikzpicture}
    \node[shape=circle,draw=black,fill=black,label=below:$e_1-e_2$] (A) at (0,0) {};
    \node[shape=circle,draw=black,fill=black,label=below:$2\ell-156789$] (B) at (4,0) {};
    \node[shape=circle,draw=black,fill=black,label=left:$\ell-134$] (C) at (2,2) {};

    \node[shape=circle,draw=black,fill=black,label=below:$2\ell-123789$] (F) at (6,0) {};
    \node[shape=circle,draw=black,fill=black,label=below:$e_3-e_4$] (G) at (10,0) {};
    \node[shape=circle,draw=black,fill=black,label=left:$\ell-356$] (H) at (8,2) {};
    
     \node[shape=circle,draw=black,fill=black,label=below:$\ell-125$] (I) at (0,-4) {};
    \node[shape=circle,draw=black,fill=black,label=below:$e_5-e_6$] (J) at (4,-4) {};
    \node[shape=circle,draw=black,fill=black,label=left:$2\ell-345789$] (K) at (2,-2) {};

    \node[shape=circle,draw=black,fill=black,label=below:$-K-e_7+e_9$] (L) at (6,-4) {};
    \node[shape=circle,draw=black,fill=black,label=below:$e_8-e_9$] (M) at (10,-4) {};
    \node[shape=circle,draw=black,fill=black,label=left:$e_7-e_8$] (N) at (8,-2) {};

    \path [-](B) edge node[left] {} (C);
    \path [-](A) edge node[left] {} (C);
    \path [-](A) edge node[left] {} (B);

    \path [-](F) edge node[left] {} (H);
    \path [-](G) edge node[left] {} (H);
    \path [-](G) edge node[left] {} (F);
    
    \path [-](I) edge node[left] {} (J);
    \path [-](J) edge node[left] {} (K);
    \path [-](K) edge node[left] {} (I);

    \path [-](L) edge node[left] {} (M);
    \path [-](M) edge node[left] {} (N);
    \path [-](N) edge node[left] {} (L);
\end{tikzpicture}
\end{center}
and 
\begin{center}
\begin{tikzpicture}
    \node[shape=circle,draw=black,fill=black,label=below:$2\ell-123456$] (A) at (0,0) {};
    \node[shape=circle,draw=black,fill=black,label=below:$\ell-689$] (B) at (4,0) {};
    \node[shape=circle,draw=black,fill=black,label=left:$e_6-e_7$] (C) at (2,2) {};
    
    \node[shape=circle,draw=black,fill=black,label=below:$e_4-e_5$] (D) at (-2,5) {};
    \node[shape=circle,draw=black,fill=black,label=below:$e_3-e_4$] (E) at (0,5) {};
    \node[shape=circle,draw=black,fill=black,label=$e_2-e_3$] (F) at (2,5) {};
    \node[shape=circle,draw=black,fill=black,label=below:$\ell-128$] (G) at (4,5) {};
    \node[shape=circle,draw=black,fill=black,label=$e_8-e_9$] (H) at (6,5) {};
    \node[shape=circle,draw=black,fill=black,label=left:$e_1-e_2$] (I) at (2,4) {};
    \node[shape=circle,draw=black,fill=black,label=left:$\ell-167$] (J) at (2,3) {};

    \path [-](B) edge node[left] {} (C);
    \path [-](A) edge node[left] {} (C);
    \path [-](A) edge node[left] {} (B);
    
    \path [-](D) edge node[left] {} (E);
    \path [-](E) edge node[left] {} (F);
    \path [-](F) edge node[left] {} (G);
     \path [-](G) edge node[left] {} (H);
    \path [-](F) edge node[left] {} (I);
    \path [-](I) edge node[left] {} (J);

\end{tikzpicture}
\end{center}
will not yield an unexpected cubic.

In the $\wilde{A}_2^{\+4}$ diagram, we have an example of a pencil spanned by $2x^2z+yz^2$ and $xy^2+2yz^2$. This pencil has a base points of multiplicity 2 at $(0,0,1)$, $(0,1,0)$, and $(1,0,0)$ and has a base point of multiplicity 3 at $(1,1,1)$. We can create a two-dimensional linear system of cubics by introducing the third polynomial $(xz+2y^2)(x+2y)$. This linear system has two base points of multiplicity 2: one at $(0,0,1)$ and one at $(1,0,0)$. It also has a base point of multiplicity 3 at $(1,1,1)$. These seven points are all the base points of the linear system. Since unexpected cubics have a general double point, we must now determine whether this linear system has a general double point.

Let us examine the linear combination $$F(xz+2y^2)(x+2y)+G(2x^2z+yz^2)+H(xy^2+2yz^2)$$ and the generic point $P=(a,b,c)$. It is sufficient to reduce to the affine case $z=1$, and we want to determine whether there are values for $F$, $G$, and $H$ such that $$F(x+2y^2)(x+2y)+G(2x^2+y)+H(xy^2+2y)$$ has a double point at $(A,B)=(a/c,b/c)$. By shifting $x\mapsto x+A$ and $y\mapsto y+B$, we get that $(F,G,H)$ annihilates the constant and linear components of the resulting polynomial if and only if $$(F,G,H)\in\ker\begin{pmatrix}-A^2B+B^3&-A^2+B&AB^2-B\\-B^2-A-B&A&B^2\\AB-A&1&-AB-1\end{pmatrix}.$$ But this matrix has a determinant $$A^4B^2-A^3B^3-A^2B^4+A^4B+A^2B^3-AB^4-B^5+A^3B-AB^3+A^3\neq 0$$ and so this two-dimensional linear system of cubics does not has a generic double point. Therefore these seven base points do not admit an unexpected cubic. We can repeat this process by instead creating a two-dimensional linear system that omits the double point at $(0,0,1)$ or at $(1,0,0)$, and we will similarly find that no set of seven base points will admit an unexpected cubic.

Next we will look at the $\wilde{A}_2\+\wilde{E}_6$ configuration. This is a pencil of cubics with base points $\alpha$ of multiplicity 2, $\beta$ of multiplicity 2, and $\gamma$ of multiplicity 5. We will denote the nine points as $\alpha^1$, $\alpha^2$, $\beta^1$, $\beta^2$, $\gamma^1$, $\gamma^2$, $\gamma^3$, $\gamma^4$, and $\gamma^5$, where $\alpha^1$, $\alpha^2$, and $\gamma^1$ are collinear, $\gamma^1$, $\gamma^2$, and $\beta^1$ are collinear, and $\beta^1$, $\beta^2$, and $\alpha^1$ are collinear.

In order to get a two-dimensional linear system of cubics, we can choose to either take the seven base points $\alpha^1,\alpha^2,\gamma^1,\dots,\gamma^5$ or $\beta^1,\beta^2,\gamma^1,\dots,\gamma^5$.

In the first scenario, we want a cubic with a generic double point $P$ given seven base points $\alpha^1,\alpha^2,\gamma^1,\dots,\gamma^5$. In order for the resulting pencil to still be quasi elliptic, $P$ needs to be on the line determined by $\gamma^1$ and $\gamma^2$, and so it turns out $P$ cannot be generic after all. If $P$ is not on this line, then we will not have the $\wilde{A}_2\+\wilde{E}_6$ diagram, and there is no quasi-elliptic Dynkin diagram for the configuration we would get.

Similarly, in the second scenario, we want a cubic with a generic double point $P$ given seven base points $\beta^1,\beta^2,\gamma^1,\dots,\gamma^5$. In order for the resulting pencil to still be quasi elliptic, $P$ needs to be on the line determined by $\beta^1$ and $\beta^2$, and so it turns out $P$ cannot be generic after all. Therefore the quasi-elliptic fibration $\wilde{A}_2\+\wilde{E}_6$ does not yield an unexpected cubic.

Therefore these $\wilde{A}_2^{\+4}$ and $\wilde{A}_2\+\wilde{E}_6$ diagrams do not yield unexpected cubics. No other diagram of a blow-down of a quasi-elliptic fibration could yield an unexpected cubic because no other diagram involves a point being blown up exactly twice, which is necessary for the generic double point. Therefore there are no unexpected cubics in characteristic 3 coming from the quasi-elliptic fibrations enumerated above.

\end{proof}
\begin{cor}\label{no3}
Let $k$ be a field of characteristic 3. There are no unexpected cubic curves in $\P^2_k$.
\end{cor}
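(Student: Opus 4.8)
The plan is to obtain Corollary \ref{no3} as an essentially formal consequence of Theorem \ref{thm3}, Proposition \ref{uq}, and the Cossec--Dolgachev classification of quasi-elliptic Dynkin diagrams in characteristic $3$. Suppose, toward a contradiction, that some set $Z$ of seven (not necessarily distinct) points of $\P^2_k$ admits an unexpected cubic. The first step is to apply Proposition \ref{uq}: for a general point $P$, the family $\Pcal_P$ of cubics through $Z\cup\{P\}$ is a quasi-elliptic fibration $X$, realized as the blow-up of $\P^2_k$ at nine points---namely the seven points of $Z$, the point $P$, and a point $P_9$ infinitely near $P$. Since $P$ is general it is a genuine (not infinitely near) point of $\P^2_k$ with $P\notin Z$, so $X$ is a quasi-elliptic surface whose presentation as a blow-up of the plane involves blowing up the point $P$ exactly twice.

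The second step is to invoke the enumeration of Section 4. By Cossec--Dolgachev every quasi-elliptic surface in characteristic $3$ has Dynkin diagram $\wilde{A}_2^{\+4}$, $\wilde{A}_2\+\wilde{E}_6$, or $\wilde{E}_8$, and Section 4 lists, up to isomorphism, every way each of these blows down to $\P^2_k$. Reading off, for each blow-down, the $(-2)$-curves of the form $e_i-e_j$---which record the towers of infinitely near points---one finds that only two of the resulting plane configurations contain a point blown up exactly twice: the relabeling of the $\wilde{A}_2^{\+4}$ blow-down whose towers have lengths $2,2,2,3$, and the relabeling of the $\wilde{A}_2\+\wilde{E}_6$ blow-down whose towers have lengths $5,2,2$. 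Every other blow-down either blows up nine mutually distinct points, or forces every tower to have length at least three (for instance the $\wilde{E}_8$ case, a single tower of length nine). Hence the surface $X$ produced above must be one of these two quasi-elliptic surfaces.

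The final step is to quote Theorem \ref{thm3}, which shows that neither of these two surfaces supports an unexpected cubic. In the $\wilde{A}_2^{\+4}$ case one completes the associated pencil to a two-dimensional linear system of cubics with base locus a set of seven points, translates the prospective double point to the origin, and checks that the resulting $3\times 3$ coefficient matrix has determinant a nonzero polynomial in the coordinates of $P$, so no member of the system acquires a double point at a general $P$; the remaining choices of which doubled point to designate as $P$ are symmetric and give the same conclusion. In the $\wilde{A}_2\+\wilde{E}_6$ case one observes that for the blow-up $Z\cup\{P,P_9\}$ to carry one of the permitted Dynkin diagrams at all, $P$ is forced to lie on a prescribed line through two of the $\gamma$-points (or $\beta$-points), contradicting its generality. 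Either way we contradict the assumption that $Z$ admits an unexpected cubic, so no such $Z$ exists and there are no unexpected cubics in $\P^2_k$ when $\kar k=3$.

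I do not expect a substantive obstacle here, since the corollary merely combines Theorem \ref{thm3} with the reduction furnished by Proposition \ref{uq}. The one point requiring care is completeness of the bookkeeping: one must be sure that the enumeration of Section 4 genuinely exhausts every quasi-elliptic fibration and every blow-down of each, so that ``$X$ is obtained by blowing up some point exactly twice'' is the only case left to analyze, and that the ``general $P$'' appearing in the definition of an unexpected curve is the same general point used in Proposition \ref{uq}.
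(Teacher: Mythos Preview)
Your proposal is correct and follows essentially the same route as the paper: reduce via Proposition \ref{uq} to a quasi-elliptic surface obtained by blowing up a general point exactly twice, match against the Section 4 enumeration, and invoke Theorem \ref{thm3}. The paper's proof differs only in that it reproves the content of Proposition \ref{uq} inline rather than citing it, and---more importantly---it explicitly fills the gap you flag at the end: to justify that the Section 4 enumeration is exhaustive, the paper invokes Corollary \ref{extremal} (extremality, hence finitely many $(-1)$-curves) together with Theorem 5.5 of \cite{Dy} (uniqueness of the embedding of the Dynkin diagram in the $(-2)$-classes up to conjugation), concluding that $X$ must have the same intersection graph as one of the seven surfaces analyzed. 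You correctly identified this as ``the one point requiring care''; the paper resolves it with those two citations rather than leaving it as a caveat.
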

\begin{proof}
Suppose there exists an unexpected cubic in $\P^2_k$. Then there are seven (possibly infinitely-near) points $P_1,\dots,P_7\in\P^2_k$ such that there exists a cubic curve $C$ containing the seven points and is singular at a general point $P_8$. Then consider a cubic curve $C'$ containing $P_1,\dots,P_7,P_8$ and is singular at some point $Q'$, so then $C.C'=P_1+\cdots+P_7+2P_8$. Then we may form a pencil $\Pcal$ of cubic curves spanned by $C$ and $C'$ whose base locus is $P_1+\cdots+P_7+P_8^{(1)}+P_8^{(2)}$, where $P_8^{(2)}$ is an infinitely-near point to $P_8^{(1)}$ corresponding to the tangent line of $C'$ at $P_8\in\P^2_k$. 

The pencil $\Pcal$ thus admits a quasi-elliptic surface $X$ by blowing up the nine points of the base locus. Then $X$ must admit a Dynkin diagram of $\wilde{A}_2^{\+4}$, $\wilde{A}_2\+\wilde{E}_6$, or $\wilde{E}_8$. By Corollary \ref{extremal}, $X$ must be extremal and so has finitely many $(-1)$-curves. Furthermore, up to reflections across $(-2)$-classes (i.e. conjugation), each of the diagrams embed in the $(-2)$-curves of $X$ in a unique way by Theorem 5.5 of \cite{Dy}. Therefore $X$ must be isomorphic to one of the seven surfaces explored in this chapter. As we have already seen in Theorem \ref{thm3}, there are no unexpected cubics coming from the seven surfaces of this chapter, giving us a contradiction.
\end{proof}

\printbibliography

\end{document}